\begin{document}


\title{On $d$-dimensional nowhere-zero $r$-flows on a graph}{}

\newtheorem{theorem}{Theorem}
\newtheorem{proposition}[theorem]{Proposition}
\newtheorem{lemma}[theorem]{Lemma}
\newtheorem{definition}[theorem]{Definition}
\newtheorem{example}[theorem]{Example}
\newtheorem{corollary}[theorem]{Corollary}
\newtheorem{conjecture}[theorem]{Conjecture}
\newtheorem{remark}{Remark}
\newtheorem{problem}{Problem}

\author{Davide Mattiolo \footnote{Department of Computer Science, KU Leuven Kulak, 8500 Kortrijk, Belgium}, Giuseppe Mazzuoccolo \footnote{Dipartimento di Informatica, Universit\`{a} degli Studi di Verona, Italy},\\Jozef Rajn\'{i}k \footnote{Department of Computer Science, Comenius University in Bratislava, Slovakia}, Gloria Tabarelli \footnote{Dipartimento di Matematica, Universit\`{a} di Trento,
Italy}}
\maketitle

\begin{abstract}
A $d$-dimensional nowhere-zero $r$-flow on a graph $G$, an $(r,d)$-NZF from now on,
is a flow where the value on each edge is an element of $\mathbb{R}^d$ whose (Euclidean) norm lies in the interval $[1,r-1]$.
Such a notion is a natural generalization of the well-known concept of circular nowhere-zero $r$-flow (i.e.\ $d=1$).
For every bridgeless graph $G$, the $5$-flow Conjecture claims that $\phi_1(G)\leq 5$, while a conjecture by Jain suggests that $\phi_d(G)=1$, for all $d \geq 3$. Here, we address the problem of finding a possible upper-bound also for the remaining case $d=2$. We show that, for all bridgeless graphs, $\phi_2(G) \le 1 + \sqrt{5}$ and that the oriented $5$-cycle double cover Conjecture implies $\phi_2(G)\leq \tau^2$, where $\tau$ is the Golden Ratio. Moreover, we propose a geometric method to describe an $(r,2)$-NZF of a cubic graph in a compact way, and we apply it in some instances.
Our results and some computational evidence suggest that $\tau^2$ could be a promising upper bound for the parameter $\phi_2(G)$ for an arbitrary bridgeless graph $G$. We leave that as a relevant open problem which represents an analogous of the $5$-flow Conjecture in the $2$-dimensional case (i.e. complex case).

\end{abstract}

\section{Introduction}\label{section intro}

Let $r\geq2$ be a real number and $d$ a positive integer, a \textit{$d$-dimensional nowhere-zero $r$-flow} on a graph $G$, denoted $(r,d)$-NZF on $G$, is an orientation of $G$ together with an assignment $\varphi \colon E(G)\to \mathbb{R}^d$ such that, for all $e\in E(G),$ the (Euclidean) norm of $\varphi(e)$ lies in the interval $[1,r-1]$ and, for every vertex, the sum of the inflow and outflow is the zero vector in $\mathbb{R}^d$. 
The \emph{$d$-dimensional flow number} of a bridgeless graph $G$, denoted by $\phi_d(G)$, is defined as the infimum of the real numbers $r$ such that $G$ admits an $(r,d)$-NZF. 
Note that, by Seymour's 6-flow theorem \cite{Seymour6flow} we have that  $\phi_d(G) \leq 6$ for every $d$.
Actually, $\phi_d(G)$ is a minimum: due to the above upper bound, it suffices to consider only the set of feasible $d$-dimensional nowhere-zero $r$-flows with $r \leq 6$, which can be represented as a compact subset of $\mathbb{R}^{d\cdot |E(G)|}$, and the function that assigns to every feasible flow the maximum norm among its components, that are elements of $\mathbb{R}^d$, is continuous.

In the above definitions it is not restrictive to assume any graph $G$ to be connected. So we only consider connected graphs in the rest of the paper.

The notion of $(r,d)$-NZF includes some parameters already considered in the literature. First of all, the $1$-dimensional case, that is $\phi_1(G)$, is nothing but the classical circular flow number of a graph (see \cite{tarsizhang}). A famous conjecture by Tutte can be stated according to our notation as follows.

\begin{conjecture}[$5$-flow Conjecture]\label{conj:5flow}
Let $G$ be a bridgeless graph. Then, $\phi_1(G)\leq 5$.
\end{conjecture}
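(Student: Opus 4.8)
The statement to be established is Tutte's $5$-flow Conjecture, which has been open since $1954$; what follows is therefore a description of the standard line of attack together with the point at which it stalls, rather than a complete argument.

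The plan is to argue by contradiction from a minimal counterexample $G$, ``minimal'' meaning that it minimizes, say, $|V(G)|+|E(G)|$. By the $6$-flow bound recalled in the introduction, $5<\phi_1(G)\le 6$. First I would invoke Tutte's classical equivalence between real circular $5$-flows and nowhere-zero $\mathbb{Z}_5$-flows, so that the goal becomes producing a nowhere-zero $\mathbb{Z}_5$-flow on $G$. Next I would peel off all the easy structure: multiple edges and degree-$2$ vertices can be suppressed, a bridge is excluded by hypothesis, a $2$- or $3$-edge cut can be contracted and the flow reconstructed by combining the flows of the pieces across the cut, and a vertex of degree at least $4$ can be split, keeping the graph bridgeless, without increasing the flow number. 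Each of these reductions is a careful but routine case analysis on how $\mathbb{Z}_5$-valued flows glue along small edge cuts. The outcome is that $G$ may be assumed to be a cyclically $4$-edge-connected cubic graph of girth at least $5$ --- essentially a snark --- and, pushing harder along the same lines (Kochol-type reductions), one may even assume much higher cyclic edge-connectivity and girth.

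The second half of the plan is to show that every such snark carries a nowhere-zero $\mathbb{Z}_5$-flow. The natural raw materials are Jaeger's $8$-flow theorem (every bridgeless graph has a nowhere-zero $\mathbb{Z}_2^3$-flow, equivalently a cover of $E(G)$ by three even subgraphs) and the fact that a cubic graph has a nowhere-zero $4$-flow precisely when it is $3$-edge-colorable. One would try to interpolate between these: combine a nowhere-zero $\mathbb{Z}_4$-flow supported on a large part of $G$ with a nowhere-zero $\mathbb{Z}_2$-flow elsewhere so that the union is nowhere zero in $\mathbb{Z}_5$; or, exploiting the reformulation in terms of (oriented) cycle double covers used elsewhere in this paper, build the $\mathbb{Z}_5$-flow directly from a suitable small cycle double cover.

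The hard part --- and the reason the conjecture is still open --- is precisely this last step: no known structural, algebraic, or probabilistic technique forces a $\mathbb{Z}_5$-flow on an arbitrary snark, and the gap between Seymour's $6$ and the conjectured $5$ has not been closed even by a single unit over the whole class of bridgeless graphs. Consequently I cannot complete the proof; the most one can currently deliver is $\phi_1(G)\le 5$, and often $\phi_1(G)<5$, for restricted families --- e.g.\ graphs admitting a Hamiltonian path, or graphs with sufficiently high edge-connectivity --- together with the partial and quantitative results that motivate the $2$-dimensional analogue studied in the rest of this paper.
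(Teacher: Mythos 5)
This statement is Tutte's $5$-flow Conjecture, which the paper presents purely as a conjecture (Conjecture \ref{conj:5flow}) with no proof --- it is a famous open problem, and the paper only uses it as motivation for the $2$-dimensional analogue. You correctly recognize this: your ``proposal'' is an honest survey of the standard reduction-to-snarks strategy together with an explicit admission that the crucial final step (forcing a nowhere-zero $\mathbb{Z}_5$-flow on an arbitrary cyclically highly-connected cubic graph) is exactly what nobody knows how to do. That is the right answer here; there is no proof in the paper to compare against, and no complete proof is known. The background you sketch (equivalence with $\mathbb{Z}_5$-flows, reduction to cubic graphs as in Proposition \ref{prop:equivalence_cubic}, Seymour's $6$-flow bound) is accurate and consistent with what the paper states.
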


An upper bound for $\phi_d$ is also conjectured for $d\geq 3$. Indeed, Jain suggested (see \cite{DeVos}) that every bridgeless graph admits a nowhere-zero flow with flow values taken on the unitary sphere $S^2$, that is the set of unit vectors of $\mathbb{R}^3$. Clearly, such a conjecture can be stated in our terminology as follows.

\begin{conjecture}[$S^2$-flow Conjecture]\label{conj:S2}
 Let $G$ be a bridgeless graph. Then, $\phi_d(G)=2$ for every $d>2$.
\end{conjecture}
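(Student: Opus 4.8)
First I would reduce the statement to its essential case, then build the required flow by a combinatorial--geometric construction. Observe first that $\phi_d(G)\ge 2$ always holds, since every flow value has Euclidean norm in $[1,r-1]$, which forces $r\ge2$; so the conjecture asserts exactly that, for every bridgeless $G$ and every $d\ge 3$, the graph $G$ has a nowhere-zero flow all of whose values lie on the unit sphere $S^{d-1}\subset\mathbb{R}^d$. Since $S^2$ sits inside $S^{d-1}$ for all $d\ge 3$ (identify $\mathbb{R}^3$ with the span of the first three coordinate vectors of $\mathbb{R}^d$), any $S^2$-flow is an $S^{d-1}$-flow, so it suffices to treat $d=3$. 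I would then reduce to bridgeless cubic graphs by the usual reduction: suppress vertices of degree $2$ (which does not affect flows), and split each vertex of degree $\ge 4$ into a path of degree-$3$ vertices joined by new edges in a way that keeps the graph bridgeless (possible by a standard lemma). An $S^2$-flow on the resulting cubic graph descends to one on $G$: summing the Kirchhoff conditions over the vertices of a split path cancels the contributions of the new internal edges and leaves precisely the condition at the original vertex, while the original edges keep their unit values.

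The geometric heart is the following remark: in a cubic graph, a nowhere-zero $S^2$-flow amounts to a choice, for each vertex $v$, of an equilateral triangle $\Delta_v\subset S^2$ (three unit vectors summing to $0$, hence automatically coplanar and pairwise at $120^\circ$), together with, for each edge $e=uv$, a unit vector $\varphi(e)$ that is, up to sign, a vertex of both $\Delta_u$ and $\Delta_v$ -- because at a degree-$3$ vertex the flow condition states exactly that the three incident flow values, signed according to the orientation, sum to the zero vector. I would first settle the \emph{bipartite} cubic case, where one may take all $\Delta_v$ equal to a single fixed triangle $\{w_1,w_2,w_3\}$: colour the edges with $1,2,3$ by K\"onig's theorem, orient every edge from one side of the bipartition to the other, and set $\varphi(e)=w_{c(e)}$; then the Kirchhoff condition at each vertex is just $w_1+w_2+w_3=0$. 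A short computation shows a single fixed frame works \emph{only} in the bipartite case, so in general the plane of $\Delta_v$ must be allowed to \emph{rotate} along the edges; I would model this as a discrete $\mathrm{O}(3)$-gauge field on $G$ and try to solve the resulting edge-compatibility system around the cycles of $G$, using a nowhere-zero $6$-flow from Seymour's theorem to organise which vertices of the various triangles are identified. This rotating-frame idea is not vacuous: it handles $K_4$ directly, and by a gluing argument it reduces the whole problem to cyclically $4$-edge-connected cubic graphs. Indeed, across a $2$-edge-cut the two flow values must be (anti)parallel and across a $3$-edge-cut they form an equilateral triangle of unit vectors, so since applying a fixed orthogonal transformation of $\mathbb{R}^3$ to all flow values again yields an $S^2$-flow, one can rotate the $S^2$-flow on one side of such a cut until it agrees with the other side along the cut edges, and then glue.

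The main obstacle -- and the reason this is stated as a conjecture rather than a theorem -- is exactly the cyclically $4$-edge-connected case, which contains the Petersen graph and all snarks. There is no small edge-cut to induct on, the graph is not bipartite so no global frame exists, and it is not $3$-edge-colourable so the colouring skeleton of the bipartite construction is gone; concretely, triangles $\Delta_u,\Delta_v$ lying in distinct planes $P_u\ne P_v$ can share a vertex (up to sign) only along the line $P_u\cap P_v$, which meets $S^2$ in a single antipodal pair, so every edge imposes a genuine coincidence that must be arranged simultaneously over all of $G$ while staying consistent around every cycle -- equivalently, the ``curvature'' of the $\mathrm{O}(3)$-gauge field must be globally trivialised. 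I expect essentially all the difficulty to be here; a plausible route is to prove the conjecture first for structured subfamilies (cubic graphs admitting a nowhere-zero $5$-flow, or a convenient decomposition into perfect matchings, or those produced by standard snark constructions) and then to show a minimal counterexample cannot exist by exploiting such structure. The extra third coordinate, unavailable in the $d=2$ setting studied in the rest of this paper, is precisely what should make the target norm $1$ attainable, whereas in the plane only $\tau^2$ seems to be.
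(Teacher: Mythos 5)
The statement you were asked about is not a theorem of the paper: it is the $S^2$-flow Conjecture, attributed to Jain, and the paper records it without proof (the only related facts it cites are that $\phi_7(G)=2$ holds unconditionally via Bermond--Jackson--Jaeger, and that $\phi_2(G)=2$ for $6$-edge-connected graphs via Thomassen). So there is no proof in the paper to compare yours against, and your proposal --- quite rightly --- does not claim to close the problem either.

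What you do establish is correct as far as it goes: the lower bound $\phi_d\ge 2$; the equivalence of $\phi_d(G)=2$ with the existence of a nowhere-zero flow valued in $S^{d-1}$ (using that the infimum is attained, as the paper notes in the introduction); the reduction from all $d\ge 3$ to $d=3$ by embedding $S^2$ into $S^{d-1}$; the reduction to cubic graphs, which is the paper's Proposition~\ref{prop:equivalence_cubic}; the bipartite cubic case via K\"onig's theorem and a single equilateral triangle of unit vectors, which recovers Thomassen's characterization; and the gluing across $2$- and $3$-edge-cuts using the transitive action of $\mathrm{O}(3)$ on the relevant boundary configurations. But the entire content of the conjecture lives in the case you explicitly leave open --- cyclically $4$-edge-connected, non-$3$-edge-colourable cubic graphs --- and the ``rotating frame'' or gauge-field language there is a reformulation of the compatibility system, not a method for trivialising it. Your text should therefore be read as a correct chain of standard reductions together with an honest identification of where the difficulty sits, not as a proof; neither your argument nor anything in the paper resolves the conjecture, which remains open.
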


Let us remark that, along the paper, we will use the term cycle in its largest acception of subgraph with all vertices of even degree. Such a use is quite common in this context and permits to simplify the presentation. 
It is already observed in \cite{Tho} that $\phi_7(G)=2$ for every bridgeless graph $G$. This is a consequence of a covering result by Bermond, Jackson and Jaeger \cite{BJJ}, claiming that every bridgeless graph $G$ has seven cycles such that every edge of $G$ is contained in exactly four of them. 
Moreover, the Berge-Fulkerson conjecture (see \cite{fulkerson, berge}), if it holds true, implies that every bridgeless cubic graph has six cycles such that every edge is in exactly four of them. As noted in \cite{Tho}, this would imply that $\phi_6(G)=2$ for any bridgeless cubic graph $G$. In a similar way, if the conjecture of Celmins and Preissmann on the existence, for every bridgeless graph, of five cycles covering each edge twice is true, then $\phi_5(G)=2$ for any bridgeless graph $G$.

Now, it is natural to ask what is a general upper bound for the $2$-dimensional case. Indeed, Conjecture \ref{conj:5flow} and Conjecture \ref{conj:S2} do not address the case $d=2$.  As far as we know, such a question is not considered in the literature yet, and one of the main goals of this paper is proposing a general upper bound for $\phi_2(G)$, see Corollary \ref{cor:phi2bound}, Theorem \ref{thm:sqrt5bound} and Problem \ref{pro:phi2bound}.

Let us note that a $2$-dimensional nowhere-zero $r$-flow can be viewed as a generalization of a $1$-dimensional nowhere-zero flow where flow values are taken in the complex field $\mathbb{C}$. This notion is already considered in \cite{Tho} in relation with Conjecture \ref{conj:S2}. Among other results, it is proved that $\phi_2(G)=2$ if $G$ is $6$-edge-connected, but no discussion about a general upper bound for $\phi_2$ is proposed by the author.   
Some other results on 2-dimensional nowhere-zero $r$-flows are obtained in \cite{Zhangandal}, where the special case of flow values taken in the 2-dimensional unit sphere $S^1$ is considered.

\section{Possible upper bounds for $\phi_2(G)$}

A cycle double cover of a graph $G$ is a collection of cycles that together include each edge of $G$ exactly twice. Notice that a subgraph with no edges is also considered a cycle of $G$.
The existence of a cycle double cover for each bridgeless graph is a famous unsolved problem, posed by Seymour and Szekeres \cite{Seymour, Szekeres}, and known as the cycle double cover conjecture. 
There are many variations on the cycle double cover conjecture (see \cite{ZhangBook} for a comprehensive survey).

Here we will consider one of the strongest formulations, known as the \emph{oriented $5$-cycle double cover conjecture}. In order to introduce it, we need to recall some terminology. 

If $G$ is a graph and $O$ is an orientation of the edges of $G$, we denote by $O(G)$ the directed graph so obtained and, for every edge $e \in E(G)$, we denote by $O(e)$ its orientation with respect to $O$. A subgraph $H$ of $O(G)$ is a \emph{directed cycle} of $O(G)$ if for each vertex $v$ of $H$, the indegree of $v$ equals the outdegree of $v$.

The collection $\mathcal{C}=\{O_1(C_1), \dots, O_k(C_k)\}$ of directed cycles of a graph $G$ is said to be an \emph{oriented cycle double cover} of $G$ if every edge $e$ of $G$ belongs to exactly two cycles $C_i$ and $C_j$ and the directions of $O_i(C_i)$ and $O_j(C_j)$ are opposite on $e$.

If we would like to stress the number of cycles in $\mathcal{C}$ we will write that $\mathcal{C}$ is an oriented $k$-cycle double cover of $G$.

The oriented $5$-cycle double cover conjecture, which is due to Archdeacon and Jaeger \cite{Archdeacon, Jaeger}, states the following.

\begin{conjecture}[Oriented $5$-cycle double cover Conjecture]\label{conj:o5cdcc}
Each bridgeless graph $G$ has an oriented $5$-cycle double cover.
\end{conjecture}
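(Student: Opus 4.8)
The plan is the one common to all cycle-double-cover statements: first reduce to cubic graphs, then dispose of the cubic graphs admitting a nowhere-zero $4$-flow, and finally confront the genuinely hard core, the snarks. I should say at the outset that Conjecture~\ref{conj:o5cdcc} is a long-standing open problem, so this is a line of attack rather than a complete argument; my aim is to isolate the reductions that are actually within reach and to pinpoint the step where every known approach stalls.

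\emph{Reduction to cubic graphs.} First I would reduce to the $3$-regular case. Given a bridgeless $G$, I would suppress degree-$2$ vertices and expand each vertex of degree $k\ge 4$ into a small bridgeless cubic gadget, obtaining a cubic bridgeless graph $G'$. An oriented $5$-CDC of $G'$ contracts back to one of $G$ with no increase in the number of cycle classes, since each gadget lies inside the directed cycles passing through the original vertex, and the orientation condition is preserved under contraction. Hence it suffices to prove the conjecture for bridgeless cubic graphs.

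\emph{The nowhere-zero $4$-flow case.} For a cubic graph the existence of a nowhere-zero $4$-flow is equivalent to $3$-edge-colorability, and it is known that every graph admitting such a flow possesses an oriented cycle double cover using only a bounded (in fact at most four) number of cycles; equivalently, these graphs carry a suitable orientable circular embedding whose oriented face boundaries cover each edge twice with opposite directions. Padding such a family with one empty cycle yields an oriented $5$-CDC. I stress that the naive three-$2$-factor double cover does \emph{not} settle this case, because it need not be orientable (already for $K_4$ the three $2$-factors correspond to a non-orientable embedding), which is why the orientable variant is the substantive point. After this step, only cubic graphs with \emph{no} nowhere-zero $4$-flow, the snarks, remain.

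\emph{The snark core and the main obstacle.} For snarks I see two avenues. The combinatorial one is structural induction: delete a carefully chosen edge or contract a reducible configuration to reach a smaller bridgeless cubic graph, apply the inductive hypothesis, and lift the oriented $5$-CDC back. The topological one exploits the classical equivalence between oriented cycle double covers and circular (strong) embeddings of $G$ in orientable surfaces, the cycles being unions of edge-disjoint face boundaries; here one would seek an orientable embedding whose faces can be organized into at most five edge-disjoint even subgraphs, equivalently a nowhere-zero $5$-flow compatible with the embedding, with the five cycle classes playing the role of the vertices of a regular pentagon. The main obstacle lives precisely here and is exactly why the conjecture remains open: no local snark reduction is known to preserve the existence of an oriented $5$-CDC, and bounding the number of cycle classes by the \emph{constant} $5$ (rather than by a quantity that grows with the genus of an arbitrary embedding) is the delicate global constraint. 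Since Conjecture~\ref{conj:o5cdcc} implies both the cycle double cover conjecture and the $5$-flow Conjecture~\ref{conj:5flow}, any successful completion of this program would resolve those as well, which is a strong indication that the snark step is genuinely hard; indeed even the Petersen graph requires an ad hoc embedding, and no systematic way to propagate such certificates through the snark hierarchy is presently available.
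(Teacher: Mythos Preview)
The statement you are addressing is a \emph{conjecture}, not a theorem: the paper does not prove it, does not claim to prove it, and invokes it only as a hypothesis from which the bound $\phi_2(G)\le\tau^2$ is derived in Corollary~\ref{cor:phi2bound}. There is therefore no ``paper's own proof'' against which to compare your proposal.

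As for the proposal itself, you are candid that it is not a proof but a programme, and you correctly locate the obstruction at the snark step. That is an accurate assessment of the state of the art, but it means what you have written is a survey of the difficulty rather than an argument. The reductions you sketch (to cubic graphs, and the disposal of the $3$-edge-colourable case via an orientable $4$-CDC) are standard and essentially correct, though even the cubic reduction for the \emph{oriented} version requires a little more care than you indicate: one must check that the gadget expansion and subsequent contraction preserve orientability of the cover, not merely the double-cover property. In any case, since the snark step is explicitly left open, the proposal does not constitute a proof of Conjecture~\ref{conj:o5cdcc}, and no comparison with the paper is possible because the paper offers none.
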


Now, we show that if Conjecture \ref{conj:o5cdcc} holds true, then we can deduce a general upper bound for the parameter $\phi_2$.
We shall obtain such a relation by the following more general result.

\begin{theorem}\label{thm:upperboundphi2}
Let $G$ be a bridgeless graph and $k \in \{2,3,4,5\}$. If $G$ admits an oriented $k$-cycle double cover, then
\begin{itemize}
	\item $\phi_2(G) = 2$, if $k\leq 3$;
	\item $\phi_2(G) \leq 1+\sqrt{2}$, if $k=4$;
	\item $\phi_2(G) \leq \tau^2$, if $k=5$.
\end{itemize}

  where $\tau$ denotes the Golden Ratio $\frac{1+\sqrt{5}}{2}$ \footnote{To our knowledge, the Greek letter $\tau$ represented the Golden Ratio for hundreds of years, up to the early 20th century. This ancient notation is used along the paper for the sake of a better distinction from the flow number.}.
\end{theorem}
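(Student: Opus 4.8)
The plan is to convert an oriented $k$-cycle double cover $\mathcal C=\{O_1(C_1),\dots,O_k(C_k)\}$ into an explicit $(r,2)$-NZF by superposing the ``unit flows'' around the directed cycles $O_i(C_i)$, weighted by vectors of $\mathbb R^2$. Fix once and for all a reference orientation $D$ of $G$. For each $i$ let $f_i\colon E(G)\to\mathbb Z$ be the flow on $(G,D)$ that takes the value $+1$ on $e$ if $O_i(e)$ agrees with $D(e)$, the value $-1$ if they disagree, and $0$ if $e\notin C_i$; this is a $\mathbb Z$-flow since $O_i(C_i)$ is a directed cycle. For any vectors $w_1,\dots,w_k\in\mathbb R^2$ set $\varphi(e)=\sum_{i=1}^k f_i(e)\,w_i$. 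Being a linear combination of $\mathbb Z$-flows, $\varphi$ satisfies the conservation law at every vertex of $(G,D)$, so $(G,D,\varphi)$ is a $2$-dimensional flow; it is an $(r,2)$-NZF exactly when $\|\varphi(e)\|\in[1,r-1]$ for every edge $e$.

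Next I would read off the nowhere-zero condition geometrically. Since $\mathcal C$ is an oriented cycle double cover, each edge $e$ belongs to exactly two cycles $C_i$ and $C_j$, and $O_i,O_j$ point oppositely along $e$; hence $f_i(e)=-f_j(e)=\pm1$ and $\varphi(e)=\pm(w_i-w_j)$, so $\|\varphi(e)\|=\|w_i-w_j\|$. Therefore $(G,D,\varphi)$ is an $(r,2)$-NZF whenever
\[
  1\ \le\ \|w_i-w_j\|\ \le\ r-1\qquad\text{for all }i\ne j .
\]
So it suffices to find $k$ points in the plane all of whose pairwise distances lie in an interval of the form $[1,\,r-1]$ with $r$ as small as possible: if such points exist with largest-to-smallest distance ratio $s_k$, then rescaling to smallest distance $1$ yields $\phi_2(G)\le 1+s_k$, and this is attained since, as recalled in the Introduction, $\phi_2$ is a minimum.

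It remains to exhibit suitable configurations. For $k=2$ pick two points at distance $1$, and for $k=3$ the vertices of a unit equilateral triangle; in both cases every pairwise distance is $1$, so $s_k=1$ and $\phi_2(G)\le 2$. Together with the bound $\phi_2(G)\ge 2$, which holds trivially because the very definition of an $(r,d)$-NZF requires $r\ge 2$, this gives $\phi_2(G)=2$ when $k\le 3$. For $k=4$ take the vertices of a unit square: the pairwise distances are $1$ and $\sqrt2$, so $s_4=\sqrt2$ and $\phi_2(G)\le 1+\sqrt2$. For $k=5$ take the vertices of a regular pentagon of unit side: its diagonals have length $\tau$, so the pairwise distances are $1$ and $\tau$, whence $s_5=\tau$ and $\phi_2(G)\le 1+\tau=\tau^2$, using the defining identity $\tau^2=\tau+1$.

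I expect no genuine obstacle here: the real content is the reduction to a point-configuration problem in $\mathbb R^2$, after which the theorem is a matter of choosing the right five configurations and performing elementary metric computations (most notably the diagonal-to-side ratio $\tau$ of the regular pentagon, which is exactly what produces the bound $\tau^2$). The only points deserving care are bookkeeping ones: checking that the superposition is flow-conserving with the signs as stated, and noting that the condition $\|w_i-w_j\|\le r-1$ is imposed over all pairs $i\ne j$ rather than only those realized by an edge of $G$ — which is harmless, since a priori a cycle double cover may force every pair of cycles to share an edge, and in the optimal configurations above all pairs lie within the admissible interval anyway.
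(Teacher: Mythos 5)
Your proposal is correct and follows essentially the same argument as the paper: superpose the oriented cycles weighted by vectors $p_1,\dots,p_k$ placed at the vertices of a regular $k$-gon of unit side, observe that each edge receives $\pm(p_i-p_j)$, and read off the bounds from the pairwise distances ($1$ for $k\le 3$; $1$ or $\sqrt{2}$ for $k=4$; $1$ or $\tau$ for $k=5$). The extra details you supply (verifying conservation of the superposition, and the trivial lower bound $\phi_2(G)\ge 2$ for the equality case) are consistent with what the paper leaves implicit.
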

\begin{proof}
Let $\mathcal{C}=\{O_1(C_1), \dots, O_k(C_k)\} $ be an oriented $k$-cycle double cover of $G$.
We construct a $2$-dimensional flow on $G$ as follows. Choose an arbitrary orientation $O$ of $G$ and $k$ elements $p_1,\dots,p_k$ in $\mathbb{R}^2$. For every $i \in \{1, \dots, k\}$, we add a flow value equal to $p_i$ to all edges $e\in C_i$ such that $O_i(e)=O(e)$, while we add $-p_i$ to all edges $e\in C_i$ such that $O_i(e)\ne O(e)$.

Observe that this procedure generates a $2$-dimensional flow, where every edge which belongs to $C_i \cap C_j$ receives one of the two vectors $\pm (p_i-p_j)$.
In order to obtain an $(r,2)$-NZF, we need the norm of each flow value $p_i-p_j$ to be at least one. Then, we choose $p_1,\ldots, p_k$ pointing at the $k$ vertices of a regular $k$-gon of side length $1$.
If $k \in \{2,3\}$, since $|p_i-p_j|=1$ for every $i\neq j$, then $\phi_2(G) =  2$. 
If $k=4$, since $|p_i-p_j|$ for every $i\neq j$ is either $1$ or $\sqrt{2}$, then $\phi_2(G) \leq 1+\sqrt{2}$. Finally, if $k=5$, the diagonals of a regular pentagon are in the golden ratio to its sides. Hence $|p_i-p_j|$ is equal to either $1$ or $\tau$ for every $i\neq j$, then $\phi_2(G) \leq \tau^2 (=1+\tau)$.
\end{proof}

Note that our choice of the vectors $p_1,\ldots,p_k$ in each of the three cases of the proof of Theorem \ref{thm:upperboundphi2} is known to be optimal in order to minimize the ratio between the maximum and the minimum length of $k$ points in the Euclidean plane (see \cite{BatErd}).

\begin{corollary}\label{cor:phi2bound}
The oriented $5$-cycle double cover conjecture (Conjecture \ref{conj:o5cdcc}) implies $\phi_2(G) \leq \tau^2$ for every bridgeless graph $G$. 
\end{corollary}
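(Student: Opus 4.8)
The plan is to obtain this as an immediate consequence of Theorem \ref{thm:upperboundphi2}. Assume Conjecture \ref{conj:o5cdcc} holds and let $G$ be an arbitrary bridgeless graph. By the conjecture, $G$ admits an oriented $5$-cycle double cover $\mathcal{C}=\{O_1(C_1),\dots,O_5(C_5)\}$; this is precisely the hypothesis required to invoke the case $k=5$ of Theorem \ref{thm:upperboundphi2}, which then yields $\phi_2(G)\leq \tau^2$. Since $G$ was arbitrary, the stated implication follows. There is essentially no obstacle: the entire content has already been packaged into Theorem \ref{thm:upperboundphi2}, and the corollary is just the specialization of its conclusion to the class of graphs guaranteed by Conjecture \ref{conj:o5cdcc}.

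It is nonetheless worth recalling, for the reader, where the value $\tau^2$ comes from in that $k=5$ case, since that is the only nontrivial point. Given the oriented $5$-CDC, fix a reference orientation $O$ of $G$ and assign to each cycle $C_i$ a vector $p_i\in\mathbb{R}^2$ pointing at the $i$-th vertex of a regular pentagon of side length $1$; on an edge $e\in C_i$ one places $p_i$ or $-p_i$ according to whether $O_i(e)$ agrees with $O(e)$. Flow conservation holds cycle-by-cycle, hence for the sum. Each edge belongs to exactly two of the cycles, say $C_i$ and $C_j$, with the two induced orientations opposite on it, so its total flow value is $\pm(p_i-p_j)$, whose Euclidean norm is either $1$ (a side of the pentagon) or $\tau$ (a diagonal, since the diagonal-to-side ratio of a regular pentagon is the Golden Ratio). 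Thus every edge receives a value with norm in $[1,\tau]$, which is an $(r,2)$-NZF for $r-1=\tau$, i.e.\ $r=1+\tau=\tau^2$, giving $\phi_2(G)\leq\tau^2$.

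The only points one must be careful about — that the construction genuinely yields a flow (the orientation-opposition condition in the definition of oriented cycle double cover is what makes the contributions of $C_i$ and $C_j$ combine as $p_i-p_j$ rather than $p_i+p_j$, so that the result is nowhere zero for distinct $i,j$), and the identity $\tau^2=1+\tau$ — are exactly the steps already verified inside the proof of Theorem \ref{thm:upperboundphi2}, so no new work is needed here.
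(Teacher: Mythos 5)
Your proposal is correct and matches the paper exactly: the corollary is stated there without proof as an immediate consequence of the $k=5$ case of Theorem \ref{thm:upperboundphi2} applied to the $5$-cycle double cover furnished by Conjecture \ref{conj:o5cdcc}. Your recap of where $\tau^2$ comes from faithfully reproduces the construction already given in the proof of that theorem, so no new work was needed and none is missing.
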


In Section \ref{sec:cubic}, we will discuss the problem of finding a graph $G$ such that $\phi_2(G)$ is close to $\tau^2$.

The upper bound of $\tau^2$ is obtained by assuming true a well-known conjecture. Now, we complete this section by proving a general upper bound for $\phi_2(G)$ as a consequence of the proof of the $6$-flow theorem of Seymour.
\begin{theorem}
\label{thm:sqrt5bound}
If $G$ is a bridgeless graph, then $\phi_2(G) \le 1 + \sqrt{5}$.
\end{theorem}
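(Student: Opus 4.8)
The plan is to feed the conclusion of Seymour's $6$-flow theorem into the same linear-combination trick used in the proof of Theorem~\ref{thm:upperboundphi2}. Recall that Seymour's theorem produces on any bridgeless $G$ a nowhere-zero $\mathbb{Z}_6$-flow, and via $\mathbb{Z}_6\cong\mathbb{Z}_2\times\mathbb{Z}_3$ this amounts to a $\mathbb{Z}_2$-flow $\alpha$ together with a $\mathbb{Z}_3$-flow $\beta$ on $G$ whose supports cover $E(G)$ (an edge on which $\alpha$ vanishes must carry a nonzero value of $\beta$). First I would replace $\alpha$ and $\beta$ by real scalar flows with respect to a fixed reference orientation $O$ of $G$. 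The support $C$ of $\alpha$ is an even subgraph of $G$; decomposing it into edge-disjoint circuits and orienting each of them cyclically yields a real flow $c$ on $G$ with $c(e)=\pm 1$ for $e\in C$ and $c(e)=0$ otherwise. Next, by the usual equivalence between modular and integer flows, $\beta$ is realized by an integer flow $g$ on $G$ with $g(e)\equiv\beta(e)\pmod 3$ and $|g(e)|\le 2$; note that $|g(e)|\le 2$ together with the congruence forces $g(e)=0$ exactly when $\beta(e)=0$, so $g(e)=0$ implies $e\in C$.

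Now fix an orthonormal pair $u,v\in\mathbb{R}^2$ and put $\varphi(e)=g(e)\,u+c(e)\,v$ for each edge $e$. Being a linear combination of the conservative scalar flows $g$ and $c$, the assignment $\varphi$ is a flow with values in $\mathbb{R}^2$; and since $u,v$ are linearly independent, $\varphi(e)=0$ would require $g(e)=c(e)=0$, which never happens because the supports of $g$ and $c$ cover $E(G)$. Hence $\varphi$ is nowhere zero. Finally, orthonormality gives $|\varphi(e)|^2=g(e)^2+c(e)^2$ with $g(e)^2\in\{0,1,4\}$, $c(e)^2\in\{0,1\}$ and the two never both $0$, so $|\varphi(e)|^2\in\{1,2,4,5\}$ and every flow value has norm in $[1,\sqrt5]$. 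Thus $\varphi$ is a $(1+\sqrt5,2)$-NZF and $\phi_2(G)\le 1+\sqrt5$.

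The only point demanding care is the first paragraph: one has to state precisely which by-product of Seymour's argument is being invoked, namely that the $\mathbb{Z}_2$- and $\mathbb{Z}_3$-parts of a nowhere-zero $\mathbb{Z}_6$-flow can be chosen with covering supports, and that the ternary part admits an integer realization bounded by $2$ in absolute value with the same support. Once that is in place, the construction and the norm estimate are immediate, so I do not expect any genuine difficulty beyond this bookkeeping. It is natural to wonder whether a non-orthonormal, possibly asymmetric, choice of $u$ and $v$ could beat $1+\sqrt5$, but the balanced orthonormal choice already meets the bound asserted in the statement, so I would leave that question aside here.
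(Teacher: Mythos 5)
Your proposal is correct and follows essentially the same route as the paper: both take the integer $2$-flow and integer $3$-flow with covering supports coming out of Seymour's $6$-flow theorem and use them as the two coordinates of a $2$-dimensional flow, giving norms in $[1,\sqrt{5}]$. The only difference is cosmetic --- the paper cites this decomposition directly as a by-product of Seymour's proof, whereas you re-derive it from the nowhere-zero $\mathbb{Z}_6$-flow statement via $\mathbb{Z}_6\cong\mathbb{Z}_2\times\mathbb{Z}_3$ and the standard modular-to-integer conversion.
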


\begin{proof}
In the proof of the $6$-flow theorem \cite[p.~133]{Seymour6flow} Seymour showed that each bridgeless graph $G$ has an integer $2$-flow $\varphi_2$ and an integer $3$-flow $\varphi_3$ such that $\varphi_2(e) \ne 0$ or $\varphi_3(e) \ne 0$ for each edge $e \in E(G)$. Let $\varphi$ be a $2$-dimensional flow on $O(G)$, for an arbitrary orientation $O$,  such that $\varphi(e) = (\varphi_2(e), \varphi_3(e))$ for each $e \in E(O(G))$. Since $\varphi_2$ and $\varphi_3$ are $2$-flow and $3$-flow, respectively, we have $\sqrt{\varphi_2(e)^2 + \varphi_3(e)^2} \le \sqrt{1^2 + 2^2} = \sqrt{5}$. Also, one of the values $\varphi_2(e)$ and $\varphi_3(e)$ is nonzero, so $\sqrt{\varphi_2(e)^2 + \varphi_3(e)^2} \geq 1$. Thus, $\varphi$ is indeed a $(1 + \sqrt{5},2)$-NZF of $G$.
\end{proof}

\section{$2$-dimensional flows on cubic graphs} \label{sec:cubic}

In the case of nowhere-zero circular flows (i.e.\ $1$-dimensional flows) it is well known that every bridgeless graph has a nowhere-zero $r$-flow if and only if every bridgeless cubic graph has a nowhere-zero $r$-flow.
Following the same proof, one can get the following result.

\begin{proposition}\label{prop:equivalence_cubic}
	For all positive integers $d$ and real numbers $r\geq 2$, the following statements are equivalent:
	\begin{itemize}
		\item every bridgeless graph has a $d$-dimensional nowhere-zero $r$-flow;
		\item every bridgeless cubic graph has a $d$-dimensional nowhere-zero $r$-flow.
	\end{itemize}
\end{proposition}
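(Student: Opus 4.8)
The plan is to mimic the classical reduction argument for circular flows, adapting it to the $d$-dimensional setting. The forward implication is trivial, since every bridgeless cubic graph is in particular a bridgeless graph, so the real content is proving that if every bridgeless cubic graph has a $d$-dimensional nowhere-zero $r$-flow, then so does every bridgeless graph. I would argue by contradiction: suppose $G$ is a bridgeless graph with no $(r,d)$-NZF, chosen to be a counterexample with the fewest edges (or fewest vertices of degree different from $3$, with edges as a tiebreaker). The aim is to show $G$ must already be cubic, contradicting the hypothesis.

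First I would dispose of low-degree vertices. A vertex of degree $1$ would give a bridge, so it does not occur. A vertex $v$ of degree $2$ with incident edges $e_1 = uv$ and $e_2 = vw$ can be suppressed: replace the path $u v w$ by a single edge $e = uw$ to get a smaller bridgeless graph $G'$, which by minimality has an $(r,d)$-NZF; orienting and transferring the value of $e$ to $e_1$ and $e_2$ (with consistent orientations) yields an $(r,d)$-NZF of $G$, a contradiction. So every vertex of $G$ has degree at least $3$. Next I would handle a vertex $v$ of degree $k \ge 4$ by the standard vertex-splitting trick: split $v$ into two vertices $v_1, v_2$ joined by a new edge, distributing the $k$ incident edges so that $v_1$ gets some and $v_2$ gets the rest, each of the new vertices having degree at least $3$, and doing so in a way that keeps the graph bridgeless. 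The resulting graph $G'$ has one fewer high-degree vertex (or the same number but is handled by the minimality measure), hence admits an $(r,d)$-NZF; contracting the new edge back recovers an $(r,d)$-NZF of $G$ at $v$, since flow conservation at $v_1$ and $v_2$ sums to flow conservation at $v$ — and crucially this works verbatim for vectors in $\mathbb{R}^d$, as the conservation law and the norm condition on each old edge are untouched.

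The one genuine subtlety, exactly as in the $1$-dimensional case, is ensuring the split at a high-degree vertex can be performed while preserving bridgelessness. The standard fact is that if $v$ has degree at least $4$ in a bridgeless graph, then there is a way to partition its incident edges into two sets, each of size at least $2$, so that the split graph remains bridgeless; this is a consequence of the structure of edge cuts (a minimal counterexample to the splitting possibility produces a $2$-edge-cut through $v$ that can be used to reroute). I would cite this as the known ingredient from the circular-flow proof rather than reprove it, noting that the argument is purely about edge connectivity and orientations and makes no reference to the dimension $d$ or the value $r$.

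Thus the only potential obstacle is purely bookkeeping: making sure the minimality measure strictly decreases at each reduction step and that transferring a flow value across a suppressed degree-$2$ vertex or a contracted split-edge respects both the conservation law and the norm bounds $[1, r-1]$ on every edge. Both are immediate because each reduction either merges edges that will carry a common value or introduces a single new edge whose value is determined by conservation and then discarded. Since none of these manipulations interact with the ambient space $\mathbb{R}^d$ — they only rearrange incidences and copy vectors — the proof is identical in form to the classical $d=1$ argument, which is why we state it as ``following the same proof''. I would therefore present the argument at the level of indicating the three cases (degree $2$, degree $\ge 4$, and the leftover cubic case) and pointing to the literature for the edge-connectivity splitting lemma.
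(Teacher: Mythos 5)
Your argument is the classical reduction that the paper merely alludes to (``following the same proof''), and it is essentially correct: the forward direction is trivial, and the reverse direction via suppressing degree-$2$ vertices and splitting vertices of degree at least $4$ (using the standard bridgeless-preserving splitting lemma) carries over verbatim to $\mathbb{R}^d$-valued flows, since contraction of the split edge and transfer of values across a suppressed vertex affect neither conservation nor the norms on the surviving edges. The one slip is your minimality measure: neither ``fewest edges'' nor ``fewest non-cubic vertices with edges as tiebreaker'' strictly decreases when a vertex of degree $k \ge 5$ is split into a degree-$3$ and a degree-$(k-1)$ vertex (this adds an edge and leaves the number of non-cubic vertices unchanged); the standard fix is to order counterexamples lexicographically by $\sum_{v}\max(\deg(v)-3,0)$ first and the number of edges second, so that splitting decreases the first coordinate and suppression the second.
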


By Proposition \ref{prop:equivalence_cubic}, there is a fixed constant $k$ such that $\phi_2(G)\le k$ for all bridgeless graphs $G$ if and only if the same holds for all bridgeless cubic graphs.

Recall that Thomassen \cite{Tho} proved that a cubic graph is bipartite if and only if it has an $S^1$-flow, that is a $2$-dimensional nowhere-zero $2$-flow. In particular, up to a rotation, one can assume that the flow values are the three cube roots of unity, that is the complex solutions of the equation $z^3=1.$

As a further step in studying the $2$-dimensional flow numbers of cubic graphs we consider those being $3$-edge-colourable. Observe that any 3-edge-colourable cubic graph has an oriented 4-cycle double cover (see for instance \cite{ZhangBook}), hence the following proposition follows from Theorem \ref{thm:upperboundphi2}.

\begin{proposition}
\label{prop:colourable}
	Let $G$ be a 3-edge-colourable cubic graph. Then $\phi_2(G)\le 1+\sqrt{2}$.
\end{proposition}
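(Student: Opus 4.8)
The statement to prove is Proposition~\ref{prop:colourable}: every $3$-edge-colourable cubic graph $G$ satisfies $\phi_2(G) \le 1 + \sqrt{2}$. The text has already told us the whole strategy: it follows from Theorem~\ref{thm:upperboundphi2} together with the fact that a $3$-edge-colourable cubic graph admits an oriented $4$-cycle double cover. So the proof is essentially a one-liner modulo verifying the oriented $4$-CDC claim, which is stated to be classical. The plan is therefore to recall why a proper $3$-edge-colouring produces such a cover and then invoke the $k=4$ case of Theorem~\ref{thm:upperboundphi2}.

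First I would fix a proper $3$-edge-colouring of $G$ with colour classes $M_1, M_2, M_3$, each a perfect matching. For each pair $\{i,j\} \subseteq \{1,2,3\}$ the union $M_i \cup M_j$ is a disjoint union of even cycles (a $2$-regular subgraph), so it forms a cycle $C_{ij}$ of $G$. There are exactly $\binom{3}{2} = 3$ such cycles, and every edge of $G$, lying in exactly one colour class, lies in exactly two of them; hence $\{C_{12}, C_{13}, C_{23}\}$ is a cycle double cover with three cycles. This is not yet what I need: I want four cycles and an \emph{oriented} cover. The standard fix is to take the four cycles $C_1 = M_1 \cup M_2$, $C_2 = M_2 \cup M_3$, $C_3 = M_3 \cup M_1$, and $C_4 = \varnothing$ the empty cycle — wait, that still only covers each edge twice with the three nonempty ones, so the right bookkeeping is to note that each of the three $2$-factors can be split so that the total multiplicity works out; more cleanly, one uses the well-known equivalence between nowhere-zero $4$-flows (equivalently $3$-edge-colourings of cubic graphs) and oriented $4$-cycle double covers. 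Concretely: a cubic graph has a nowhere-zero $\mathbb{Z}_2 \times \mathbb{Z}_2$-flow $\varphi$, and for each of the four nonzero-or-zero ``directions'' one extracts an oriented cycle; I would simply cite \cite{ZhangBook} for this passage rather than reproving it, as the excerpt does.

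Once the oriented $4$-cycle double cover $\mathcal{C} = \{O_1(C_1), O_2(C_2), O_3(C_3), O_4(C_4)\}$ of $G$ is in hand, the conclusion is immediate: by the $k=4$ case of Theorem~\ref{thm:upperboundphi2}, $\phi_2(G) \le 1 + \sqrt{2}$. That theorem's proof places the $p_i$ at the vertices of a unit square, so the pairwise differences $|p_i - p_j|$ are $1$ or $\sqrt{2}$, giving edge norms in $[1, \sqrt{2}] \subseteq [1, (1+\sqrt{2}) - 1]$.

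The only real content beyond citation is the oriented $4$-CDC claim, and the potential obstacle is making sure the orientations are genuinely \emph{opposite} on each shared edge rather than merely that the underlying cycles double-cover. This is handled by the correspondence with nowhere-zero $4$-flows: writing a $\mathbb{Z}_2 \times \mathbb{Z}_2$-flow as a pair of $\mathbb{Z}_2$-flows, each $\mathbb{Z}_2$-flow is the indicator of an even subgraph and lifts (non-canonically) to an oriented cycle, and a short parity argument shows one can choose these lifts so that along every edge the two covering cycles run in opposite directions; since the excerpt explicitly permits citing \cite{ZhangBook} for ``any 3-edge-colourable cubic graph has an oriented 4-cycle double cover'', I would not belabour this and would keep the proof of Proposition~\ref{prop:colourable} to the two sentences: exhibit the oriented $4$-CDC (with citation) and apply Theorem~\ref{thm:upperboundphi2}.
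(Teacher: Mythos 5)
Your proposal is correct and follows the paper's own route exactly: the paper likewise just observes that a $3$-edge-colourable cubic graph has an oriented $4$-cycle double cover (citing \cite{ZhangBook}) and then applies the $k=4$ case of Theorem \ref{thm:upperboundphi2}. Your brief detour through the three $2$-factors $M_i \cup M_j$ (which give only an unoriented $3$-cycle double cover) is correctly abandoned in favour of the standard nowhere-zero $4$-flow correspondence, so no gap remains.
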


The above inequality is the best possible as one can directly check that $\phi_2(K_4)=1+\sqrt{2}$. However, we can obtain it as a
special case (i.e. $n=3$) of the following more general result which gives an exact value for $\phi_2(W_n)$, where $W_n$ is the wheel graph of order $n+1$.

The proof of Theorem \ref{thm:wheels} is quite long and technical and it will appear in another paper \cite{wheels}. 

\begin{theorem} \label{thm:wheels}
Let $W_n$ be the wheel graph of order $n+1$, for $n \geq 3$. Then $$\phi_2(W_n) = \begin{cases}
      2& \text{ if } n \text{ is even}, \\
      1 + 2\sin(\frac{\pi}{6}\cdot \frac{n}{n-1})& \text{ if } n\equiv 1, 3 \mod 6\\
      1 + 2\sin(\frac{\pi}{6}\cdot \frac{n+1}{n})& \text{ if } n\equiv 5 \mod 6. \\                                                                                                                               \end{cases}
 $$ 
\end{theorem}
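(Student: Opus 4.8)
The goal is to compute $\phi_2(W_n)$ exactly, so the proof naturally splits into an upper bound (exhibit an explicit $(r,2)$-NZF for $r$ equal to the claimed value) and a matching lower bound (show no $(r,2)$-NZF exists for smaller $r$). I would set up coordinates on the plane and exploit the structure of $W_n$: it has a hub $h$ joined to every vertex of an $n$-cycle $v_1 v_2 \cdots v_n$. Orient the rim consistently, say $v_i \to v_{i+1}$, and orient all spokes $h \to v_i$. The flow conservation at $v_i$ then says the spoke value at $v_i$ equals the difference of the two incident rim values, and conservation at the hub says the spoke values sum to zero. So a $2$-dimensional flow is equivalent to choosing points $q_1, \dots, q_n \in \mathbb{R}^2$ (the "potentials" of the rim vertices, so that the rim edge $v_i v_{i+1}$ carries $q_{i+1}-q_i$ and the spoke $h v_i$ carries $q_i - c$ for a suitable constant $c$); being nowhere-zero means every $|q_{i+1}-q_i| \ge 1$, every $|q_i - c| \ge 1$, and $c$ being the centroid-type constraint forces $c = \frac{1}{n}\sum q_i$ after translating. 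Actually cleaner: place the $q_i$ so that $\sum (q_i - c) = 0$, i.e. $c$ is the average; minimizing $r-1$ means minimizing $\max$ of all the pairwise/radial distances subject to all being $\ge 1$.

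**Upper bound.** For the even case, I would place $q_1, \dots, q_n$ alternately at two antipodal points at distance $1$ from a common center, which makes all rim edges and spokes have norm exactly $1$ — but one must check the hub constraint $\sum(q_i-c)=0$ holds, which it does by symmetry when $n$ is even. For odd $n$ the obstruction is precisely that an alternating $2$-point pattern is impossible, so I would spread the $q_i$ around a circle: put $q_i$ at angle $\theta_i$ on a circle of radius $\rho$ centered at $c$, chosen so consecutive rim points subtend the right angle. The three subcases ($n$ even, $n \equiv 1,3 \bmod 6$, $n\equiv 5\bmod 6$) reflect how efficiently one can wrap a path of $n$ points around a circle while keeping all chord lengths and all radii balanced and the vectors $q_i - c$ summing to zero — the factor $\frac{n}{n-1}$ versus $\frac{n+1}{n}$ comes from whether one "winds" the rim path once or uses one extra step. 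The formula $1 + 2\sin(\text{angle}/2)$ is exactly the chord length $|q_{i+1}-q_i|$ on a unit circle when consecutive points are separated by that angle, plus the "$1$" needed because a nowhere-zero $r$-flow needs norms in $[1,r-1]$ so the length $2\sin(\cdot)$ of the longest chord must equal $r-1$. I would verify in each case that the configuration can be scaled so the minimum of all norms (chords and radii) is exactly $1$ and the maximum is the claimed $r-1$, and that $\sum (q_i - c) = 0$.

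**Lower bound — the hard part.** This is where the real work lies: showing that for odd $n$ one cannot do better than the stated value. The zero-sum condition $\sum_{i}(q_i - c) = 0$ together with all $|q_i-c|\ge 1$ and all consecutive rim distances $|q_{i+1}-q_i|\ge 1$ must be shown to force some rim distance (or some radius, depending on parity class) to be at least the claimed bound. I expect the argument to go via an extremal/variational analysis: assume an optimal flow, show by perturbation that in an optimum all the "tight-to-the-maximum" edges form a rigid sub-configuration, and that balancing forces the $q_i - c$ to be essentially $n$-th (or $(n-1)$-th) roots of unity scaled appropriately; a parity count on how the rim path can traverse these roots while returning zero-sum yields the three residue classes mod $6$. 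Controlling the interaction between the radial constraints $|q_i - c| \ge 1$ and the rim constraints $|q_{i+1}-q_i|\ge 1$ simultaneously — neither set can be ignored — is the main obstacle, and is presumably why the authors defer this ``long and technical'' argument to the companion paper \cite{wheels}. A secondary subtlety is ruling out degenerate or non-symmetric optima, which likely needs a convexity argument on the (nonconvex, because of the $\ge 1$ constraints) feasible set, handled by a careful case analysis on which constraints are active.
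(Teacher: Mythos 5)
The first thing to say is that the paper contains no proof of this theorem to compare against: the authors explicitly state that the proof ``is quite long and technical and it will appear in another paper \cite{wheels}.'' So your proposal can only be judged on its own terms, and on those terms it is a plan rather than a proof. The entire content of the theorem beyond exhibiting a configuration is the lower bound, and your treatment of it consists of statements of the form ``I expect the argument to go via an extremal/variational analysis'' and ``presumably why the authors defer this.'' Nothing there is an argument; in particular you never explain where the three residue classes mod $6$, or the specific angles $\frac{\pi}{6}\cdot\frac{n}{n-1}$ and $\frac{\pi}{6}\cdot\frac{n+1}{n}$, actually come from.

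There is also a concrete error in your setup that already breaks the easy cases. Flow conservation at the rim vertex $v_i$ forces the spoke value to be the \emph{difference} of the two incident rim values, and conservation at the hub is then automatic (the differences telescope around the rim). Hence the correct reduction is: choose free points $q_1,\dots,q_n\in\mathbb{R}^2$ as the \emph{rim edge} values, let the spokes carry $q_i-q_{i-1}$, and minimize $r$ subject to $|q_i|\in[1,r-1]$ and $|q_i-q_{i-1}|\in[1,r-1]$; there is no constraint $\sum_i(q_i-c)=0$. You instead declare the rim edges to carry $q_{i+1}-q_i$ and the spokes to carry $q_i-c$, which is incompatible with your own (correct) statement that the spoke value equals the difference of the rim values, and which imposes a spurious centroid condition. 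This propagates: in the even case your ``two antipodal points at distance $1$ from a common center'' give difference vectors of norm $2$, not $1$, so the configuration you describe is not a $(2,2)$-NZF in either parametrization. (The correct even-case construction alternates between two unit vectors at angular distance $60^\circ$, so that both the points and their differences have norm $1$.) For odd $n$ the optimum is not attained with all points on a single circle and equal angular steps summing to $2\pi$ --- for $n=3$ the claimed value $r-1=\sqrt{2}$ is strictly smaller than the chord length $\sqrt{3}$ that such a configuration would force --- so the annulus $[1,r-1]$ must be genuinely exploited, and your sketch does not engage with this. In short: the reduction to a planar geometric optimization is the right first move, but both the reduction and the optimization are carried out incorrectly or not at all.
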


The next lemma is an immediate consequence of Theorem \ref{thm:wheels}. 
\begin{lemma}
\label{lem:odd_gith}
Let $G$ be a cubic graph containing a chordless cycle $C$ of length $k$. Then $\phi_2(G)\geq \phi_2(W_k)$.
\end{lemma}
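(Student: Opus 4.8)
The plan is to take a $(r,2)$-NZF $\varphi$ of $G$ witnessing $r = \phi_2(G)$ (recall from the introduction that the infimum is attained, so such a flow exists), and to produce from it an $(r,2)$-NZF of the wheel $W_k$, which forces $r \ge \phi_2(W_k)$. The idea is that a chordless cycle $C$ of length $k$ in a cubic graph behaves, from the point of view of the rest of the graph, like the rim of a wheel: each vertex of $C$ has exactly one edge leaving $C$, and contracting everything outside $C$ should collapse those $k$ pendant edges into a single hub.

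First I would set $C = v_1 v_2 \cdots v_k v_1$ and let $e_i$ be the unique edge of $G$ incident with $v_i$ that does not lie on $C$ (this edge exists and is unique because $G$ is cubic and $C$ is chordless, so no $e_i$ is a chord). Let $G'$ be the graph obtained from $G$ by deleting all vertices not on $C$ and then identifying the $k$ half-edges $e_1, \dots, e_k$ at a single new vertex $h$; concretely, $G'$ has vertex set $\{v_1, \dots, v_k, h\}$, the rim edges $v_i v_{i+1}$, and $k$ spoke edges $h v_i$. Thus $G'$ is exactly the wheel $W_k$. The key step is to check that the restriction of $\varphi$ to $E(C)$ together with appropriately chosen values on the spokes is again a nowhere-zero $r$-flow. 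For the spoke $h v_i$ I would assign the net flow that $\varphi$ pushes along $e_i$ away from $v_i$; by conservation at $v_i$ in $G$, this equals the net flow of $\varphi$ into $v_i$ along the two rim edges, so Kirchhoff's law holds at every $v_i$ in $G'$. Conservation at the hub $h$ then follows automatically, since the total flow on the spokes equals the total net flow out of $C$ through the $e_i$'s, which is $0$ because $\varphi$ is a flow on $G$ and $V(G)\setminus V(C)$ is a vertex set across which the net flow vanishes.

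The one genuine obstacle is that a spoke value $h v_i$ obtained this way might have norm less than $1$, or even be zero, so the resulting flow on $W_k$ need not be nowhere-zero on the spokes. This can happen when $e_i$ is a multi-edge in a tree-like attachment, but more seriously when $\varphi$ simply does not push much flow through $e_i$. To handle this I would appeal to the standard trick used for circular flows: instead of restricting a single flow, observe that $\phi_2$ is monotone under the operation of deleting the "outside" and adding back a parallel structure, or alternatively argue that one may first modify $\varphi$ outside $C$ without changing its restriction to $C$ so as to guarantee each $e_i$ carries norm at least $1$ — this is possible because the outside of $C$ is itself bridgeless after suitable reductions, or because one can scale. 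Since $\phi_2(G) \le 1+\sqrt 5 < 6$ is bounded, and the admissible region is compact, a limiting/continuity argument lets us assume the witnessing $\varphi$ is chosen to maximise $\min_i \|\varphi(e_i)\|$, and for a cubic bridgeless graph this minimum is at least $1$. Granting that, the flow constructed on $W_k = G'$ has all edge-norms in $[1, r-1]$, hence $W_k$ admits an $(r,2)$-NZF, giving $\phi_2(W_k) \le r = \phi_2(G)$, which is the claim.
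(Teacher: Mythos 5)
Your core construction is exactly the paper's proof: contract everything outside the chordless $k$-cycle $C$ to a single hub, observe that the result is $W_k$ and that the flow $\varphi$ descends to it, and conclude $\phi_2(W_k)\le\phi_2(G)$. That part is correct, including the conservation check at the rim vertices and at the hub.

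However, the ``one genuine obstacle'' you identify in your last paragraph does not exist, and the machinery you invoke to deal with it is not a valid argument. Since $G$ is cubic and $C$ is chordless, each $v_i$ is incident with exactly one edge $e_i$ leaving $C$, and that single edge \emph{becomes} the spoke $hv_i$ after contraction; the value you assign to the spoke is therefore $\pm\varphi(e_i)$ for an actual edge $e_i$ of $G$. Because $\varphi$ is an $(r,2)$-NZF of $G$, this value already has norm in $[1,r-1]$ --- it cannot be zero or of norm less than $1$. There is no ``net flow through a multi-edge attachment'' to worry about, so no modification of $\varphi$ outside $C$, no compactness argument, and no maximisation of $\min_i\|\varphi(e_i)\|$ is needed. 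Moreover, the fix you sketch would not stand on its own: the assertion that one can always rearrange the flow outside $C$ so that each $e_i$ carries norm at least $1$, or that a maximiser of $\min_i\|\varphi(e_i)\|$ achieves value at least $1$, is exactly the nowhere-zero condition you already have for free, and as written it is asserted rather than proved. Delete that paragraph and your proof coincides with the one in the paper.
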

\begin{proof}
Suppose to the contrary that $\phi_2(G) <\phi_2(W_k)$. Then $G$ has an $(r,2)$-NZF $\varphi$ with $r < \phi_2(W_k)$. Contract all the vertices of $G$ that are not in $C$ to a unique vertex $v$. The obtained graph is $W_k$ and $\varphi$ induces on $W_k$ an $(r',2)$-NZF with $r'\leq r<\phi_2(W_k)$, a contradiction.
\end{proof}

Lemma \ref{lem:odd_gith} combined with the values given in Theorem \ref{thm:wheels}, gives the following.

\begin{corollary}
\label{coro:odd_gith}
Let $G$ be a cubic graph with odd-girth equal to $g$. Then $\phi_2(G)\geq \phi_2(W_g)$.
\end{corollary}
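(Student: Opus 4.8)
The plan is to derive Corollary~\ref{coro:odd_gith} as a direct consequence of Lemma~\ref{lem:odd_gith} together with a monotonicity property of the function $n \mapsto \phi_2(W_n)$ on odd integers. First I would observe that if $G$ is a cubic graph with odd-girth $g$, then $G$ contains a cycle of length $g$, and since $g$ is the shortest odd cycle length, this cycle is chordless: any chord would split the $g$-cycle into two shorter cycles, one of which is odd, contradicting the definition of odd-girth. Applying Lemma~\ref{lem:odd_gith} with $C$ this chordless $g$-cycle immediately yields $\phi_2(G) \geq \phi_2(W_g)$, which is exactly the claim.

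Strictly speaking, that already finishes the proof, so there is essentially no obstacle. The one point that deserves a line of care is the chordlessness argument: I would spell out that in a cubic graph a chord of a $g$-cycle $C$ together with each of the two arcs of $C$ forms a cycle, whose lengths sum to $g+2$; since $g$ is odd, one of these two lengths is odd and strictly less than $g$, which would contradict minimality of the odd-girth. (If $g = 3$ there are no chords to begin with.) Hence such a chord cannot exist and $C$ is chordless, so Lemma~\ref{lem:odd_gith} applies.

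I should also note one subtlety that the statement sidesteps: although the corollary as phrased only uses the existence of \emph{some} short chordless odd cycle, one might expect a sharper bound by taking $k$ as large as possible among chordless cycles of $G$; but the values in Theorem~\ref{thm:wheels} are not monotone in $n$ (for instance $\phi_2(W_n) = 2$ for all even $n$), so it is not true in general that a longer chordless cycle gives a better bound. The corollary therefore deliberately restricts to the odd-girth cycle, and no monotonicity claim is needed — Lemma~\ref{lem:odd_gith} is invoked once, for $k = g$, and nothing more. I expect the write-up to be three or four sentences long.

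\begin{proof}
Let $G$ be a cubic graph with odd-girth $g$ and let $C$ be a cycle of $G$ of length $g$. We claim that $C$ is chordless. If $g = 3$ this is immediate. If $g \geq 5$ and $C$ had a chord $e$, then $e$ together with the two arcs of $C$ joining its endpoints would form two cycles whose lengths sum to $g + 2$; as $g$ is odd, one of these lengths is odd and strictly smaller than $g$, contradicting the definition of odd-girth. Hence $C$ is a chordless cycle of length $g$, and Lemma~\ref{lem:odd_gith} gives $\phi_2(G) \geq \phi_2(W_g)$.
\end{proof}
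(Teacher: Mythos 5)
Your proof is correct and follows exactly the route the paper intends: the paper states the corollary as an immediate consequence of Lemma~\ref{lem:odd_gith}, the only missing detail being that a shortest odd cycle is chordless, which you verify correctly (the two cycles created by a chord have lengths summing to $g+2$, so one is odd and shorter than $g$). Nothing further is needed.
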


Using Corollary \ref{coro:odd_gith} we prove the following

\begin{proposition}
\label{prop:prism}
Let $n$ be odd and let $P_n$ be the prism graph of order $2n$. Then $\phi_2(P_n)=\phi_2(W_n)$.
\end{proposition}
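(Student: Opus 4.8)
The plan is to prove the two inequalities $\phi_2(P_n)\ge\phi_2(W_n)$ and $\phi_2(P_n)\le\phi_2(W_n)$ separately. For the lower bound I would first observe that, for odd $n$, the bridgeless cubic graph $P_n$ has odd-girth exactly $n$. Realize $P_n$ as two vertex-disjoint $n$-cycles $t_1t_2\cdots t_n$ and $b_1b_2\cdots b_n$ together with the rung edges $t_ib_i$. One of these two rim $n$-cycles is an odd cycle of length $n$, so the odd-girth is at most $n$; and it is not smaller, because if $Z$ is any cycle of $P_n$, then assigning the label ``top'' or ``bottom'' to each vertex (the label flips precisely along rung edges) shows that $Z$ uses an even number of rungs, while the map sending each $t_i$ and $b_i$ to $i\in\mathbb{Z}_n$ turns the non-rung edges of $Z$ into a closed walk $W$ of $C_n$ of length $|W|=|Z|-(\#\text{rungs})\equiv|Z|\pmod 2$; since a closed walk in $C_n$ of nonzero winding number has length at least $n$ and one of odd length automatically has odd (hence nonzero) winding number, an odd $Z$ forces $|Z|\ge|W|\ge n$. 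With the odd-girth of $P_n$ equal to $n$, Corollary~\ref{coro:odd_gith} immediately gives $\phi_2(P_n)\ge\phi_2(W_n)$. (Alternatively, one may bypass the odd-girth computation and apply Lemma~\ref{lem:odd_gith} directly to a rim $n$-cycle, which is chordless in $P_n$ since all its vertices lie in one of the two copies of $C_n$ whose only edges among them are the cycle edges.)

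For the upper bound, which I expect to be the real content, the idea is to lift an optimal flow of $W_n$ to $P_n$. Since $\phi_d$ is always attained (as recalled in the introduction), fix an $(r,2)$-NZF of $W_n$ with $r=\phi_2(W_n)$; after orienting the spokes away from the hub $h$ and the rim consistently around the cycle $u_1u_2\cdots u_n$ (adjusting signs of the flow values to match), let $a_i\in\mathbb{R}^2$ be the value on the rim edge $u_iu_{i+1}$, so that flow conservation at $u_i$ determines the value on the spoke $hu_i$, and all $2n$ of the vectors $a_i$ and these spoke values have norm in $[1,r-1]$. Now build a flow on $P_n$ by putting $a_i$ on the top edge $t_it_{i+1}$, the opposite vector $-a_i$ on the bottom edge $b_ib_{i+1}$, and the appropriately signed spoke value on the rung $t_ib_i$. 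Conservation at $t_i$ is literally the rim equation of $W_n$ at $u_i$; conservation at $b_i$ holds as well, since along the bottom cycle the increment $(-a_i)-(-a_{i-1})=a_{i-1}-a_i$ equals the value put on the rung at $b_i$; and every edge value has norm in $[1,r-1]$. Hence $P_n$ admits an $(r,2)$-NZF and $\phi_2(P_n)\le r=\phi_2(W_n)$.

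Combining the two bounds yields $\phi_2(P_n)=\phi_2(W_n)$. The single nontrivial ingredient is the lift above, and the way I would motivate it is this: a $2$-dimensional flow of $P_n$ is determined by the vectors $a_1,\dots,a_n$ on the top cycle together with one free vector $e\in\mathbb{R}^2$, with the bottom edge $b_ib_{i+1}$ then carrying $e-a_i$ and the rung $t_ib_i$ carrying exactly the corresponding $W_n$-spoke value; the construction corresponds to the choice $e=0$, which is precisely what makes the extra bottom-cycle constraints $|e-a_i|\in[1,r-1]$ collapse onto the already-satisfied constraints $|a_i|\in[1,r-1]$. I would also remark that this lift is valid for every $n$, so $\phi_2(P_n)\le\phi_2(W_n)$ holds without the parity hypothesis; for even $n$ equality still holds — both sides equal $2$, since $P_n$ is then bipartite — but there the matching lower bound comes from bipartiteness rather than from Corollary~\ref{coro:odd_gith}.
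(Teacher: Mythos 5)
Your proposal is correct and follows essentially the same route as the paper: the lower bound comes from the odd-girth of $P_n$ being $n$ together with Corollary~\ref{coro:odd_gith}, and the upper bound comes from lifting an optimal flow of $W_n$ to $P_n$ by duplicating the rim values (with a sign flip) on the two $n$-cycles and placing the spoke values on the rungs. The extra details you supply (the verification that the odd-girth equals $n$, and the observation that the lift works for all $n$) go beyond what the paper writes but do not change the argument.
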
  
\begin{proof}
Since $P_n$ has odd girth $n$, we have $\phi_2(P_n) \ge \phi_2(W_n)$.

Moreover, each flow on $W_n$ can be easily extended to a flow on $P_n$ using the same vectors: for every $4$-cycle $uu'v'v$ where $uu'$ and $vv'$ are spokes of $P_n$, we set the flow from $u$ to $v$ to be the same as the flow from $v'$ to $u'$.
Thus we have $\phi_2(P_n) = \phi_2(W_n)$. 
\end{proof}

Also, Corollary \ref{coro:odd_gith}, together with Proposition \ref{prop:colourable}, implies the following result.

\begin{proposition}
\label{prop:upper_colorable}
Let $G$ be a  $3$-edge-colourable cubic graph with a triangle. Then $\phi_2(G) = 1 + \sqrt{2}$.
\end{proposition}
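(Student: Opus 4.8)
The plan is to combine the lower bound coming from odd-girth considerations with the upper bound for $3$-edge-colourable cubic graphs. First I would invoke Corollary \ref{coro:odd_gith}: if $G$ is a cubic graph with a triangle, then its odd-girth is $g = 3$, so $\phi_2(G) \geq \phi_2(W_3)$. From Theorem \ref{thm:wheels}, since $3 \equiv 3 \pmod 6$, we have $\phi_2(W_3) = 1 + 2\sin\!\left(\frac{\pi}{6}\cdot\frac{3}{2}\right) = 1 + 2\sin\!\left(\frac{\pi}{4}\right) = 1 + \sqrt{2}$. Hence $\phi_2(G) \geq 1 + \sqrt{2}$.

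Next, for the upper bound, I would apply Proposition \ref{prop:colourable}: since $G$ is a $3$-edge-colourable cubic graph, $\phi_2(G) \leq 1 + \sqrt{2}$. Putting the two inequalities together yields $\phi_2(G) = 1 + \sqrt{2}$, and because $\phi_d$ is always attained as a minimum (as observed in the introduction), the value is achieved, not merely approached.

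The argument is essentially a two-line sandwich, so there is no real obstacle beyond making sure the hypotheses of the two ingredients are both met — and they are, since ``$3$-edge-colourable cubic with a triangle'' simultaneously supplies odd-girth $3$ (for the lower bound via Corollary \ref{coro:odd_gith}) and $3$-edge-colourability (for the upper bound via Proposition \ref{prop:colourable}). The only point that warrants a moment's care is the evaluation $\phi_2(W_3) = 1+\sqrt{2}$ from the piecewise formula in Theorem \ref{thm:wheels}, which I would spell out explicitly so the reader sees that the lower bound matches the upper bound exactly; this is also consistent with the remark following Proposition \ref{prop:colourable} that $\phi_2(K_4) = 1 + \sqrt{2}$, $K_4$ being the smallest such graph.
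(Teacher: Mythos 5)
Your proof is correct and is exactly the argument the paper intends: the text introduces this proposition as a direct consequence of Corollary \ref{coro:odd_gith} (lower bound via $\phi_2(W_3)=1+\sqrt{2}$ from Theorem \ref{thm:wheels}) and Proposition \ref{prop:colourable} (upper bound). Your explicit evaluation of the piecewise formula at $n=3$ is a welcome addition, since the paper leaves that computation to the reader.
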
  

Up to now, the unique infinite classes of non-bipartite cubic graphs for which we are able to determine the exact value of $\phi_2$ are the ones considered in Proposition \ref{prop:prism} and Proposition \ref{prop:upper_colorable}.

In the rest of the paper we provide upper bounds on the $2$-dimensional flow number of certain cubic graphs. To make our descriptions of $2$-dimensional flows more compact, we show that they can be equivalently represented in a geometric way. The main idea of this approach is that, by the Kirchhoff's law, the three vectors assigned to three edges incident with the same vertex correspond to a triangle in the Euclidean plane. Thus we can represent a $2$-dimensional flow as a suitable collection of triangles.

By a \emph{triangle} we mean a subset of the Euclidean plane consisting of its three sides and interior points. Let $s_1$ and $s_2$ be sides of triangles $T_1$ and $T_2$, respectively. We say that $s_1$ and $s_2$ are \emph{attachable} if
we can translate $T_1$ to $T_1'$ in such a way that the image of $s_1$ coincides with $s_2$ and $T_1'$ and $T_2$ have no common internal points. In other words, attachable sides need to be parallel, of the same length and they need to have their triangles on mutually opposite sides.
An \emph{$r$-flow triangulation} of a bridgeless cubic graph $G$ is a collection $\mathcal{T}$ containing a triangle $T_v$ for each vertex $v$ of $G$ such that
\begin{itemize}
\item[(i)] for each $v \in V(G)$, each edge incident to $v$ corresponds to a unique side of $T_v$;
\item[(ii)] lengths of sides of all triangles from $\mathcal{T}$ are from the interval $[1, r - 1]$;
\item[(iii)] for each edge $uv \in E(G)$, the sides of the triangles $T_u$ and $T_v$ corresponding to $uv$ are attachable.
\end{itemize}
\begin{proposition}
\label{prop:triangulation}
Let $G$ be a bridgeless cubic graph. Then $G$ has an $r$-flow triangulation if and only if $G$ has an $(r, 2)$-flow.
\end{proposition}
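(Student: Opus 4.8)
The plan is to set up an explicit dictionary between $2$-dimensional flows on $G$ and $r$-flow triangulations of $G$, making rigorous the picture sketched just before the statement. The one thing that needs care is the interplay between three notions of orientation: the reference orientation $O$ of $G$ used to record a flow, the edge-orientation of $G$ one eventually reads off, and the orientation (clockwise versus counter-clockwise) of each triangle in the plane. Fix $O$ once and for all. For a $2$-dimensional flow $\varphi$ on $O(G)$, a vertex $v$, and an edge $e$ at $v$, set $b_v(e)=\varphi(e)$ if $O(e)$ leaves $v$ and $b_v(e)=-\varphi(e)$ if $O(e)$ enters $v$; Kirchhoff's law at $v$ is then exactly the identity $b_v(e_1)+b_v(e_2)+b_v(e_3)=\mathbf 0$ for the three edges $e_1,e_2,e_3$ at $v$. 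Laying these vectors head to tail bounds a triangle $T_v$ with one side per edge $e_i$, of length $|\varphi(e_i)|$ and parallel to $\varphi(e_i)$; once a cyclic order of the edge-vectors around $\partial T_v$ is chosen, $T_v$ is determined up to translation, and exactly one of the two cyclic orders makes $\partial T_v$ counter-clockwise. I will always use that representative, postponing the degenerate case (three collinear vectors) to the end.

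First I would prove ``$(r,2)$-flow $\Rightarrow$ $r$-flow triangulation''. Let $\mathcal T=\{T_v:v\in V(G)\}$ be the family of counter-clockwise triangles just built, placed arbitrarily in the plane. Condition (i) is immediate and condition (ii) is precisely the statement that the norms of $\varphi$ lie in $[1,r-1]$. For condition (iii), take an edge $uv$, oriented by $O$ from $u$ to $v$, say; then $b_u(uv)=\varphi(uv)$ and $b_v(uv)=-\varphi(uv)$, so the sides $s_u\subseteq\partial T_u$ and $s_v\subseteq\partial T_v$ representing $uv$ are parallel and of equal length. Translating $T_u$ so that $s_u$ is carried onto $s_v$, the counter-clockwise boundary of the translated triangle runs along the common side with displacement $\varphi(uv)$, whereas that of $T_v$ runs along it with displacement $-\varphi(uv)$; as a counter-clockwise region lies to the left of its boundary, the two triangles sit on opposite sides of the line through the common side, hence share no interior point. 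So $s_u$ and $s_v$ are attachable and $\mathcal T$ is an $r$-flow triangulation.

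For the converse, ``$r$-flow triangulation $\Rightarrow$ $(r,2)$-flow'', I would run this backwards. Given a flow triangulation $\mathcal T$, orient every $T_v$ counter-clockwise, obtaining a displacement vector $d_v(s)$ for each side $s$ of $T_v$; the three such vectors of a given $T_v$ sum to $\mathbf 0$ since $\partial T_v$ is a closed triangle. The same ``left of the boundary'' observation shows that for each edge $uv$ the attachability of the representing sides forces $d_u(s_u)=-d_v(s_v)$. Now orient each edge $uv$ arbitrarily, from $u$ to $v$ say, and put $\varphi(uv):=d_u(s_u)$; one checks that $b_w(e)=d_w(s_e)$ for every vertex $w$ and every incident edge $e$ (with $s_e$ the side of $T_w$ representing $e$), so $\sum_{e\ni w}b_w(e)=\sum_{s\subseteq\partial T_w}d_w(s)=\mathbf 0$, which is Kirchhoff's law at $w$; and $|\varphi(uv)|$ is the length of $s_u$, which lies in $[1,r-1]$ by condition (ii). Hence $\varphi$ is an $(r,2)$-flow of $G$.

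The hard part is exactly the orientation bookkeeping flagged at the start: the sign convention for $b_v$, the consistent use of counter-clockwise triangles, and the translation used in condition (iii) must all be lined up so that the purely geometric ``triangles on opposite sides'' becomes the purely algebraic ``$b_u(uv)=-b_v(uv)$'' that Kirchhoff demands, with no residual sign ambiguity. Beyond that, one must dispose of the degenerate case: when the three vectors at a vertex are collinear, $T_v$ collapses to a segment, has empty interior, and carries no counter-clockwise orientation, so the identity $b_v(e_1)+b_v(e_2)+b_v(e_3)=\mathbf 0$ together with the reduced form of attachability (parallel and of equal length) has to be handled by a separate, elementary argument. I expect that case to be routine rather than substantial, so the bulk of the work lies in the two non-degenerate implications above.
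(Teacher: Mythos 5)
Your proposal is correct and follows essentially the same route as the paper: both directions are proved by the same dictionary between the three signed flow vectors at a vertex (outgoing convention) and the sides of a consistently oriented triangle, with attachability read off from the ``region lies to the left of its counter-clockwise boundary'' observation. Your explicit flagging of the degenerate (collinear) case is a point the paper silently glosses over, but otherwise the argument is the same.
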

\begin{proof}
We start with the only if part. Let $O$ be an arbitrary orientation of the edges of $G$. We construct a $2$-dimensional flow on $G$ as follows. Consider an oriented edge $uv$ of $O(G)$ and let $a$ and $b$ be the vectors corresponding to the attachable sides of triangles $T_u$ and $T_v$, respectively, which are oriented in such a way that $T_u$ is on the right side of $a$ and $T_v$ is on the left side of $b$. Due to the definition of attachable sides, the vectors $a$ and $b$ have the same direction, so they are equal. We set to $a$ the flow value of the edge $uv$. Note that if we orient $uv$ in the opposite direction, it receives the opposite vector, thus we do not need any specific orientation of $G$.

We prove that this assignment is an $(r, 2)$-NZF. Consider a vertex $v$ and orient all three edges incident with $v$ as incoming.
The vectors assigned to these edges form a triangle $T_v$ and since all of them have $T_v$ on the left side, they sum up to zero.

Now for the if part, assume that $G$ has an $(r, 2)$-NZF.
For each vertex $v$ of $G$, let $e_1$, $e_2$ and $e_3$ be the oriented edges of $O(G)$ incident with $v$. For each $i \in \{1, 2, 3\}$, let $a_i$ be the flow value of $e_i$, if $v$ is the tail of $e_i$, and let $a_i$ be the opposite of flow value of $e_i$ otherwise. Then, the vectors $a_1$, $a_2$ and $a_3$ sum up to zero. Moreover, we can arrange them to form an oriented triangle $T_v$ that is on the left side of each of $a_1$, $a_2$ and $a_3$.

We prove that the triangles $T_v$ for each $v \in V(G)$ form an $r$-flow triangulation. Properties (i) and (ii) are trivially satisfied. Let $uv$ be an oriented edge of $O(G)$ with flow value $a$. Since $u$ is the tail and $v$ is the head of $uv$, the triangle $T_u$ lies on the left side of $a$ and $T_v$ lies on the right. Thus the sides of $T_u$ and $T_v$ corresponding to $a$ are attachable. Hence Property (iii) also holds.
\end{proof}

For a bridgeless cubic graph $G$, finding the representation of a $2$-dimensional flow through a flow triangulation is, in general, only a reformulation of the original problem. However, in the following examples we present flow triangulations in some ``nice'' way.
The term nice can be understood in several ways, but perhaps the most basic one requires that the 
intersection of every two different triangles $T_1$ and $T_2$, if not empty, consists either of one vertex, 
or of two coinciding sides $s_1$ and $s_2$ of $T_1$ and $T_2$, respectively. In the latter case, $s_1$ and $s_2$ correspond to the same edge of $G$ and the set of all such edges induces a connected spanning subgraph of $G$.
Examples of such nice flow triangulations of $K_4$ and $K_{3,3}$ are depicted in Figure \ref{fig:k4_k33_flow}. In all our figures, the graph is grey with its vertices placed in their corresponding triangles. Bold sides are of length $1$ and dashed ones are always the sides with maximum length.
Nevertheless, we do not know whether such a ``nice'' flow triangulation exists for every $2$-dimensional flow. 

\begin{figure}[h]
\centering
	\includegraphics[scale=0.6]{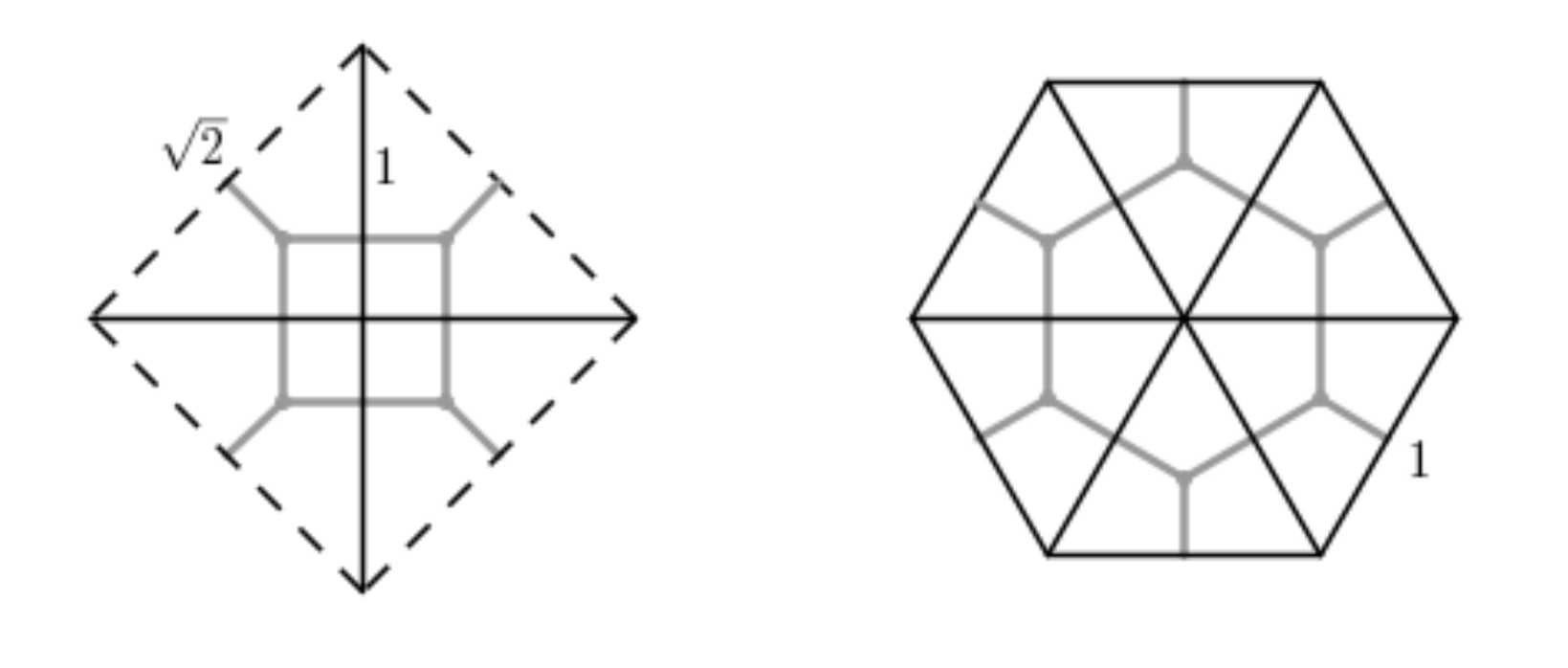}
	\caption{A $2$-flow triangulation of $K_4$ (left) and $K_{3,3}$ (right).}\label{fig:k4_k33_flow}
\end{figure}

We have already seen an upper bound on the $2$-dimensional flow number of $3$-edge-colourable cubic graphs in Proposition \ref{prop:colourable}.
As usual, in order to prove a general bound on $\phi_2(G)$ for every bridgeless cubic graph $G$, the hard case is when $G$ is not $3$-edge-colourable. Therefore, we are naturally interested in the $2$-dimensional flow number of the Petersen graph, which is the smallest such graph. Let us say that determining this value appears to be a hard problem. Here we propose an upper bound by constructing a suitable flow triangulation.

\begin{proposition}
The $2$-dimensional flow number of the Petersen graph is at most $1+\sqrt{7/3}$.
\end{proposition}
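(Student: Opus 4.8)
The plan is to use Proposition \ref{prop:triangulation} and exhibit an explicit $(1+\sqrt{7/3})$-flow triangulation of the Petersen graph; equivalently, to build a concrete $(1+\sqrt{7/3},2)$-NZF. First I would recall that the Petersen graph $P$ has a natural structure as two disjoint $5$-cycles (an outer $5$-cycle and an inner $5$-cycle, the latter being a pentagram) joined by a perfect matching of five spokes. I would exploit the cyclic ($\mathbb{Z}_5$) symmetry: prescribe the flow (and hence the ten triangles $T_v$) so that it is invariant, up to an isometry of the plane, under the rotation by $2\pi/5$ that cyclically permutes the five ``sectors'' of $P$. This reduces the problem to choosing, in a single fundamental domain, one triangle for an outer vertex and one for an inner vertex, together with the three edge-vectors per vertex, subject to Kirchhoff's law at each of the two vertices and to the attachability conditions along the one outer edge, the one spoke, and the two pentagram edges that leave the fundamental domain.

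The key steps, in order, are: (1) Fix the orientation $O$ and assign a vector $f_{\mathrm{out}}$ to each outer edge, rotated by $2\pi/5$ from one edge to the next, so that the five outer vectors sum to zero automatically; do the same with a vector $f_{\mathrm{in}}$ on the pentagram edges; and let the spoke at sector $i$ carry $s_i$. (2) Impose Kirchhoff at an outer vertex: the incoming outer vector, the outgoing outer vector and the spoke must sum to zero, which (using the rotational relation between the two consecutive outer vectors) pins down $s_i$ in terms of $f_{\mathrm{out}}$; similarly Kirchhoff at an inner vertex pins down the same $s_i$ in terms of $f_{\mathrm{in}}$, giving one vector equation linking $f_{\mathrm{out}}$ and $f_{\mathrm{in}}$. (3) With the remaining free parameters (essentially the lengths $\lvert f_{\mathrm{out}}\rvert$, $\lvert f_{\mathrm{in}}\rvert$ and the angle between them), compute the lengths of all three vector types — outer, inner, spoke — and optimise: choose the parameters so that the maximum of these lengths, divided by the minimum, is as small as possible; the claim is that the optimum forces the maximum length to be a multiple $1+\sqrt{7/3}-1 = \sqrt{7/3}$ of a unit minimum, after rescaling so that the shortest edge has length $1$. (4) Finally, verify attachability: since the construction was made rotation-equivariant, each edge $uv$ carries one well-defined vector, and $T_u$, $T_v$ sit on opposite sides of it by the choice of left/right orientation, exactly as in the proof of Proposition \ref{prop:triangulation}; and check that all side lengths land in $[1, \sqrt{7/3}]$ so that condition (ii) holds. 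The value $\sqrt{7/3}$ strongly suggests that two of the three edge-orbits end up with equal length $\ell$ and the third with length $\ell\sqrt{7/3}$ (or the reciprocal relation), with the $7/3$ emerging from a law-of-cosines computation at the $2\pi/5$-type angles; presenting the explicit triangulation in a figure, as the paper does for $K_4$ and $K_{3,3}$, would make the whole argument transparent.

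I would also sanity-check the bound against Corollary \ref{coro:odd_gith}: the Petersen graph has odd-girth $5$, so $\phi_2(P)\ge\phi_2(W_5) = 1+2\sin(\tfrac{\pi}{6}\cdot\tfrac{6}{5}) = 1+2\sin(\tfrac{\pi}{5})$, which is about $1.176$, while $1+\sqrt{7/3}\approx 2.528$; the proposed construction lies comfortably between this lower bound and the conjectured $\tau^2\approx 2.618$, which is reassuring and explains why no contradiction arises.

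The main obstacle I expect is step (3): turning the geometric degrees of freedom into the right scalar optimisation and checking that $\sqrt{7/3}$ is genuinely achievable (not merely a plausible target) while simultaneously keeping Kirchhoff's law satisfied at both vertex types. In particular, one must be careful that the angle constraints coming from equating the two expressions for the spoke vector $s_i$ are compatible with the rotational symmetry — there is a real risk that full $\mathbb{Z}_5$-equivariance is too rigid and one needs only the weaker cyclic structure (allowing the inner pentagram to be traversed in the ``wrong'' cyclic order, as a pentagram rather than a pentagon), which changes the rotation relating consecutive inner vectors from $2\pi/5$ to $4\pi/5$ and is almost certainly where the $7/3$ comes from. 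Handling that traversal correctly, and then doing the one genuinely non-trivial trigonometric minimisation, is the crux; everything else is bookkeeping that Proposition \ref{prop:triangulation} reduces to routine verification.
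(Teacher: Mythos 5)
Your overall strategy (build an explicit flow triangulation and invoke Proposition \ref{prop:triangulation}) is the right one, but the concrete ansatz you propose cannot reach the claimed bound. If the flow is required to be $\mathbb{Z}_5$-equivariant up to a planar isometry, that isometry must be a rotation $R$ of order $5$, i.e.\ by $2\pi k/5$ for some $k\in\{1,2,3,4\}$. The five outer vectors are then $a, Ra, \dots, R^4a$, and Kirchhoff's law at the outer vertices \emph{forces} the spoke vectors to be $R^{i-1}(a-Ra)$; similarly the inner (pentagram) vectors are $b, Tb,\dots,T^4b$ with $T^3=R$ (this is exactly your $4\pi/5$ observation) and the spokes must also equal $T^{j-1}(b-Tb)$. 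Equating the two expressions fixes \emph{both} the ratio $|b|/|a|$ \emph{and} the angle between $a$ and $b$, so the ``free parameters'' you list in step (3) reduce to a global scale plus the discrete choice of $k$. Computing the three orbit lengths, one finds $|a-Ra|=2\sin(\pi k/5)\,|a|=2\sin(2\pi k/5)\,|b|$, and in every case $k=1,2,3,4$ the ratio of the maximum to the minimum of $\{|a|,|b|,|a-Ra|\}$ equals $2\sin(2\pi/5)\approx 1.902$. Hence full $\mathbb{Z}_5$-equivariance yields only $\phi_2(P)\le 1+2\sin(2\pi/5)\approx 2.902$, which is strictly worse than $1+\sqrt{7/3}\approx 2.528$ (and even worse than $\tau^2$). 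The rigidity you flagged as a ``risk'' is in fact fatal, and the pentagram/$4\pi/5$ fix does not escape it. (A minor side remark: the sanity-check lower bound is $\phi_2(W_5)=1+2\sin(\pi/5)\approx 2.176$, not $1.176$.)

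The paper's construction abandons the $5$-fold symmetry and instead uses a configuration with $3$-fold symmetry: an equilateral triangle $p_1p_2p_3$ of side $1$, the reflected points $q_i=2p_i-p_{i-1}$, and three further points $q_i'$ completing a regular hexagon; the resulting ten triangles have side lengths $1$, $\sqrt{4/3}$ and $\sqrt{7/3}$ and form a $(1+\sqrt{7/3})$-flow triangulation of the Petersen graph. Equivalently (as noted at the end of Section \ref{sec:cubic}), it is the dual of an embedding of $P$ on a flat torus carrying a triangular lattice, which is why the squared lengths $1$, $4/3$, $7/3$ arise rather than the pentagonal quantities $2\sin(\pi/5)$, $2\sin(2\pi/5)$. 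To repair your proof you would need to switch to such a $\mathbb{Z}_3$-equivariant (or wholly asymmetric) configuration; as written, the argument establishes a weaker bound than the one stated.
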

\begin{proof}
Throughout this proof, we take all the indices modulo $3$. Consider, in the real Euclidean plane, an equilateral triangle $p_1p_2p_3$ with side length $1$. For $i \in \{1, 2, 3\}$, let $q_ip_i$ be the reflection of $p_{i-1}p_i$ through $p_i$ and let $q_1'$, $q_2'$ and $q_3'$ be the points such that $q_1q_3'q_2q_1'q_3q_2'$ is a regular hexagon. By adding the segments $q_i'p_{i+1}$ and $q_i'p_{i+2}$, for each $i \in \{1, 2, 3\}$, we obtain $10$ triangles as depicted in Figure \ref{fig:petersen_flow}. The solid, dash-dotted and dashed lines have lengths $1$, $\sqrt{4/3}$ and $\sqrt{7/3}$, respectively. It is easy to check that these triangles form a $(1+\sqrt{7/3})$-flow triangulation of the Petersen graph.
\end{proof}

\begin{figure}[h]
	\centering
	\includegraphics[scale=0.25]{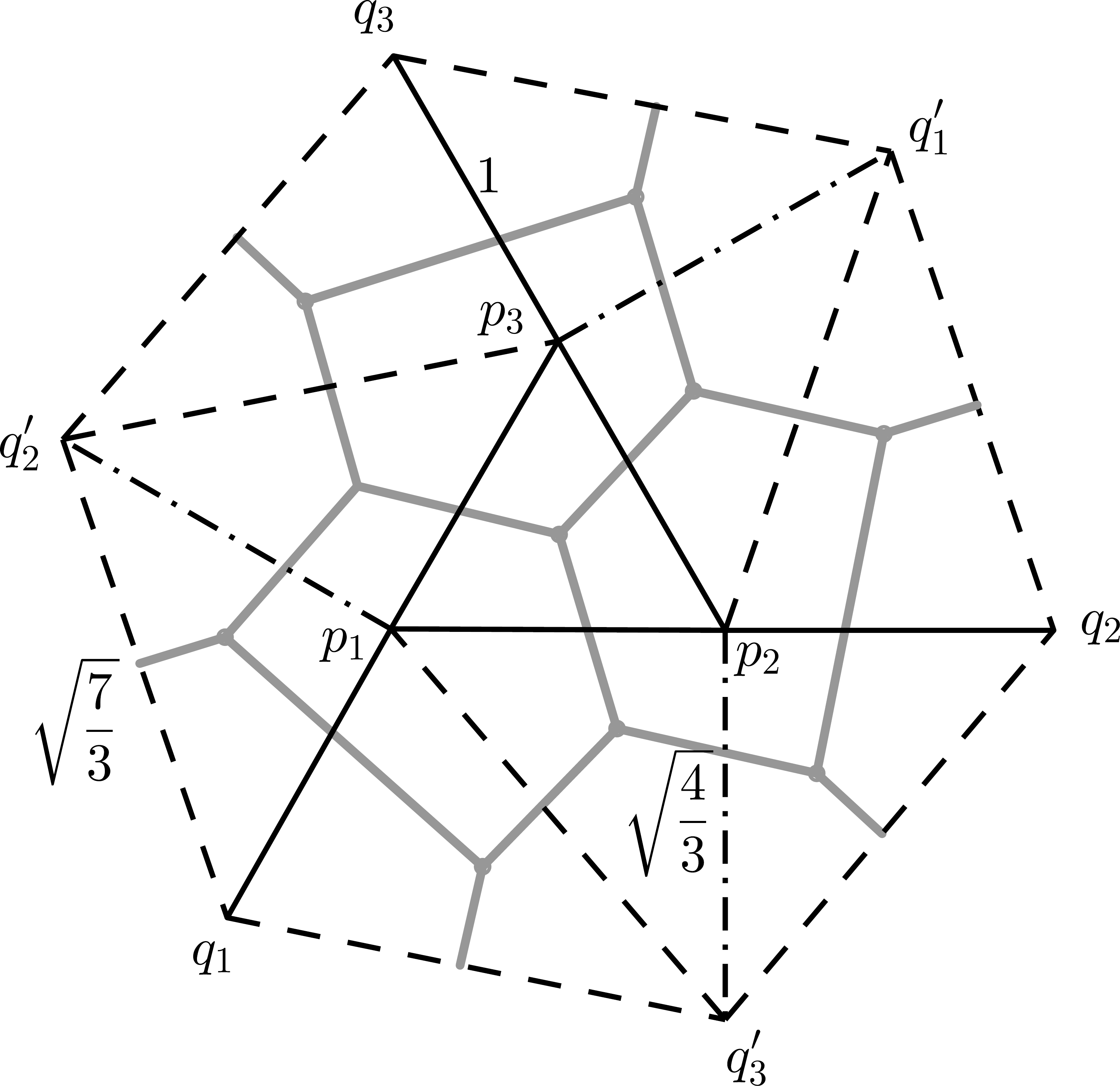}
	\caption{A $(1+\sqrt{7/3})$-flow triangulation of the Petersen graph.}\label{fig:petersen_flow}
\end{figure}

Supported by computational results we believe that this is the exact $2$-dimensional flow number of the Petersen graph. Since we currently know no tools for proving such high lower bounds on $2$-dimensional flow numbers, we propose the following conjecture.

\begin{conjecture}
The $2$-dimensional flow number of the Petersen graph is $1+\sqrt{7/3}$.
\end{conjecture}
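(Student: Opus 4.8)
The proposition above establishes $\phi_2(P)\le 1+\sqrt{7/3}$ for the Petersen graph $P$, so what remains for the conjecture is the matching lower bound: every $2$-dimensional flow $\varphi$ on $P$ must satisfy $\max_{e\in E(P)}|\varphi(e)|\ge\sqrt{7/3}$ (since $\phi_2$ is attained, it suffices to rule out flows all of whose values have norm in $[1,\sqrt{7/3})$). The first point is that nothing general will do: Corollary~\ref{coro:odd_gith} only yields $\phi_2(P)\ge\phi_2(W_5)=1+2\sin(\pi/5)<1+\sqrt 2<1+\sqrt{7/3}$, so the odd-girth-$5$ structure is by itself too weak, and a genuinely Petersen-specific argument is needed.

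My plan is to reduce to a compact, low-dimensional problem by means of a perfect matching. The automorphism group of $P$ acts transitively on its six perfect matchings, and for one of them $P-M$ is visibly the disjoint union of the outer $5$-cycle and the inner pentagram; hence for every perfect matching $M$, the graph $P-M$ consists of two $5$-cycles $C_1$ and $C_2$ joined by the five edges of $M$, where $C_1$ is traversed ``as a pentagon'' and $C_2$ ``as a pentagram''. Writing $s_0,\dots,s_4$ for the flow values of $\varphi$ on $M$ (so $\sum_i s_i=0$), Kirchhoff's law at the endpoints of $M$ expresses the flow values on $C_1$ as the vertices of a closed pentagon $A$ with consecutive edge vectors $-s_1,-s_2,-s_3,-s_4,-s_0$, and the flow values on $C_2$ as the vertices of a closed pentagon $B$ with consecutive edge vectors $s_2,s_4,s_1,s_3,s_0$: the same five vectors up to sign, but in the two different cyclic orders corresponding to a pentagon and a pentagram. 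In these coordinates, $\varphi$ being a $(1+L,2)$-NZF says precisely that $1\le|s_i|<L$ for all $i$ and that $A$ and $B$ can each be translated so that all of their vertices lie in the annulus $\{x:1\le|x|<L\}$; equivalently, by Proposition~\ref{prop:triangulation}, that the ten vertex-triangles of $P$ can be assembled in the Petersen pattern with all side lengths in $[1,L)$. Conversely every such configuration gives a $2$-flow on $P$, so the conjecture becomes the statement that this (at most $12$-dimensional, semialgebraic) feasibility problem has no solution for $L<\sqrt{7/3}$.

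The key Petersen-specific observation, and the step I expect to be the heart of the matter, is that $A$ and $B$ cannot both be convex: a convex polygon traversed monotonically has its edge vectors occurring in one fixed cyclic order up to reversal, and the order $(s_1,s_2,s_3,s_4,s_0)$ forced by $A$ differs from the pentagram order $(s_2,s_4,s_1,s_3,s_0)$ forced by $B$. So at least one of the two pentagons is stellated, and I would then split into cases. If exactly one pentagon, say $A$, is convex, then squeezing a convex pentagon with sides $\ge 1$ into the annulus $\{1\le|x|<L\}$ forces its edge directions to be close to a regular $72^\circ$ spacing, which pins down the directions of the $s_i$; feeding these, in pentagram order, into $B$ turns $B$ into essentially a regular pentagram, whose five vertices cannot all lie in an annulus of outer-to-inner ratio below $\sqrt{7/3}$, and in fact this case should be comfortably worse than $\sqrt{7/3}$. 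The remaining case, where neither pentagon is convex, is the delicate one; I would handle it by a direct estimate on a ``nearly convex'' pentagon trapped in the annulus, using the freedom to choose $M$ (for instance so that it contains, or avoids, an edge of maximal flow value) and the rotational symmetry $O(2)$ to normalise the picture. Getting either branch all the way down to the sharp value $\sqrt{7/3}$ is where the real effort goes; a realistic alternative is a computer-assisted proof on the reduced problem, where for each choice of supporting unit normals $(n_e)_{e\in E(P)}$ one relaxes $|\varphi(e)|\ge 1$ to the linear inequality $\langle\varphi(e),n_e\rangle\ge 1$, obtaining a convex (linear-plus-disk) feasibility problem whose infeasibility for $L<\sqrt{7/3}$ is certified by conic duality, and one then covers the compact normal space $(S^1)^{15}$ by finitely many neighbourhoods on each of which a single certificate works.

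The principal obstacle is exactly the non-convexity created by the lower bounds $|\varphi(e)|\ge 1$: one cannot average an optimal flow over $\mathrm{Aut}(P)$, nor relax to a convex program without losing the exact constant, which is why the soft arguments used elsewhere in this paper — yielding $\phi_2=2$ for highly edge-connected graphs, or $\phi_2\le 1+\sqrt 2$ for $3$-edge-colourable cubic graphs through Theorem~\ref{thm:upperboundphi2} — are of no help here. Moreover the value $\sqrt{7/3}$ is ``exotic'': it is none of the regular-polygon constants of Theorem~\ref{thm:upperboundphi2} nor of the wheel formula of Theorem~\ref{thm:wheels}, which strongly suggests that the extremal flow — the order-$3$-symmetric assembly of equilateral triangles and a regular hexagon used in the proposition above — is rigid, so any proof will have to engage directly with the combinatorics of how the outer pentagon and the inner pentagram of $P$ share their five matching vectors.
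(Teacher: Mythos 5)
This statement is left \emph{open} in the paper: the authors prove only the upper bound $\phi_2(P)\le 1+\sqrt{7/3}$ (via the explicit flow triangulation) and state explicitly that they ``know no tools for proving such high lower bounds,'' offering only computational evidence. So there is no paper proof to match, and the real question is whether your argument closes the gap. It does not. Your reduction is sound and genuinely useful: deleting a perfect matching $M$ leaves two $5$-cycles, the flow values on each cycle are the vertices of a closed pentagon whose edge vectors are the matching values $s_0,\dots,s_4$ taken in pentagon order on one cycle and in pentagram order $(0,2,4,1,3)$ on the other, and the nowhere-zero condition becomes an annulus-containment condition (with one translation degree of freedom per cycle, namely the circulation). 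Your observation that the two pentagons cannot both be convex is also essentially correct when the five directions are distinct, though it needs care for coinciding or degenerate edge directions.

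The gap is that everything quantitative is missing, and you say so yourself. In the ``one convex pentagon'' branch, the claim that containment in an annulus of ratio $L<\sqrt{7/3}\approx 1.53$ forces edge directions near regular $72^\circ$ spacing is unsubstantiated (such an annulus is not very tight: a convex pentagon with sides $\ge 1$ inscribed in a disk of radius $1.53$ has considerable freedom), so the conclusion that the pentagram partner then violates the annulus is not established. The ``neither convex'' branch, which you correctly identify as the delicate one, is not treated at all beyond a statement of intent, and the proposed computer-assisted certificate (covering $(S^1)^{15}$ by neighbourhoods with conic-duality infeasibility certificates) is a plan, not a proof -- note also that the feasible region of the exact problem is non-convex, so each linearised certificate only excludes a relaxation attached to a particular choice of normals, and the covering argument must be done with verified numerics to reach the sharp constant $\sqrt{7/3}$. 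As it stands the strongest lower bound your argument actually delivers is the one already in the paper, $\phi_2(P)\ge \phi_2(W_5)=1+2\sin(\pi/5)$, so the conjecture remains open.
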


The $1$-dimensional flow number can distinguish $3$-edge-colourable cubic graphs, which have $1$-dimensional flow number at most $4$, from the non-$3$-edge-colourable bridgeless cubic graphs having $1$-dimensional flow number greater than $4$ (see for instance \cite{Tutte5FC}). However, the $2$-dimensional flow number does not serve for this purpose. One of the counterexamples is the Isaacs snark $J_5$ (see Figure~\ref{fig:j5}) for which we show that $\phi_2(J_5) < 1 + \sqrt{2} = \phi_2(K_4)$. We have found an $(r, 2)$-NZF of $J_5$ for $r = 1 + 1.387893647$ with the help of a computer.

\begin{proposition}
$\phi_2(J_5) \le 2.387893647 < 1+\sqrt{2}$.
\end{proposition}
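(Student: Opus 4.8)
The plan is to mimic the method used for the Petersen graph: construct an explicit $r$-flow triangulation of $J_5$ with $r = 2.387893647$, and then invoke Proposition \ref{prop:triangulation} to conclude that $\phi_2(J_5) \le r$. The snark $J_5$ is the fifth member of the Isaacs flower snarks, obtained by taking five copies of the ``star'' gadget $K_{1,3}$ with an extra edge subdividing two of the legs (so each copy contributes one degree-$3$ hub, and the copies are linked cyclically along an outer $5$-cycle and two inner $5$-cycles). The natural first step is to exploit this $\mathbb{Z}_5$ rotational symmetry: design a flow that is invariant under the cyclic automorphism, so that only the triangles of a single gadget need to be specified, and the twenty vertices of $J_5$ give rise to triangles that are copies of five ``template'' triangles placed at five rotated positions.

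First I would set up Kirchhoff's law gadget-by-gadget. For each copy $i \in \mathbb{Z}_5$, label the three flow vectors around the hub and around each of the three subdivision vertices; the cyclic symmetry forces the vectors in copy $i$ to be obtained from those in copy $0$ by multiplying by $\omega^i$ where $\omega = e^{2\pi i/5}$ (treating $\mathbb{R}^2$ as $\mathbb{C}$). This reduces the unknowns to a handful of complex numbers subject to (a) the three triangle-closure equations at the hub and the three subdivision vertices of copy $0$, and (b) the ``link'' conditions that an edge shared between copy $0$ and copy $1$ receives consistent values, i.e. the value in copy $1$ equals $\omega$ times the corresponding value in copy $0$. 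Solving this small linear system leaves a low-dimensional family of feasible flows parametrised by a few real degrees of freedom. Then I would numerically optimise within this family to minimise the maximum edge-norm, landing on the stated value $r - 1 = 1.387893647$; presenting the resulting vectors (or the corresponding triangles) explicitly, one checks by direct computation that every edge-norm lies in $[1, r-1]$ and that every pair of attachable sides has equal length and opposite orientation, exactly as in the Petersen proof. Since $1.387893647 < \sqrt 2 = 0.414\ldots + 1 - 1$, wait---more precisely since $2.387893647 < 1 + \sqrt 2 = 2.414\ldots$, the strict inequality $\phi_2(J_5) < \phi_2(K_4)$ follows.

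The main obstacle is that there is genuinely no closed-form elegance here: the optimal $r$ is the root of some polynomial of modest degree arising from the optimisation, and the argument is ultimately a finite verification rather than a conceptual proof. Concretely, the hard part will be organising the explicit data---whether as a list of $30$ vectors in $\mathbb{R}^2$ or as a labelled figure analogous to Figure \ref{fig:petersen_flow}---so that the reader can independently confirm all the edge-length constraints and all the attachability (parallel, equal-length, opposite-side) constraints without re-doing the search. A secondary subtlety is making sure the symmetry reduction is legitimate: one must check that a $\mathbb{Z}_5$-invariant flow actually exists, i.e. that the linear system above is consistent and that its solution space meets the open region where all norms are strictly below $\sqrt 2$; if the symmetric family turned out to be too rigid, I would fall back to an asymmetric computer search over all $30$ real coordinates, which is what the phrase ``with the help of a computer'' in the statement suggests was in fact done. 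In either case the proof is complete once the explicit flow is exhibited and Proposition \ref{prop:triangulation} is applied.
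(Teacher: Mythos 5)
Your proposal matches the paper's approach: the paper likewise gives no conceptual argument, but simply exhibits a computer-found $(2.387893647,2)$-NZF of $J_5$, presented as an (approximate) flow triangulation in Figure~\ref{fig:j5_flow}, whose validity is a finite verification via Proposition~\ref{prop:triangulation}. The symmetry reduction you sketch is an optional extra (and note that $J_5$ is five claws linked by one $5$-cycle and one $10$-cycle, not two $5$-cycles), but your fallback to an unrestricted computer search is exactly what the paper does.
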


\begin{figure}[h]
\begin{minipage}[l]{0.45\linewidth}
\includegraphics[width=\linewidth]{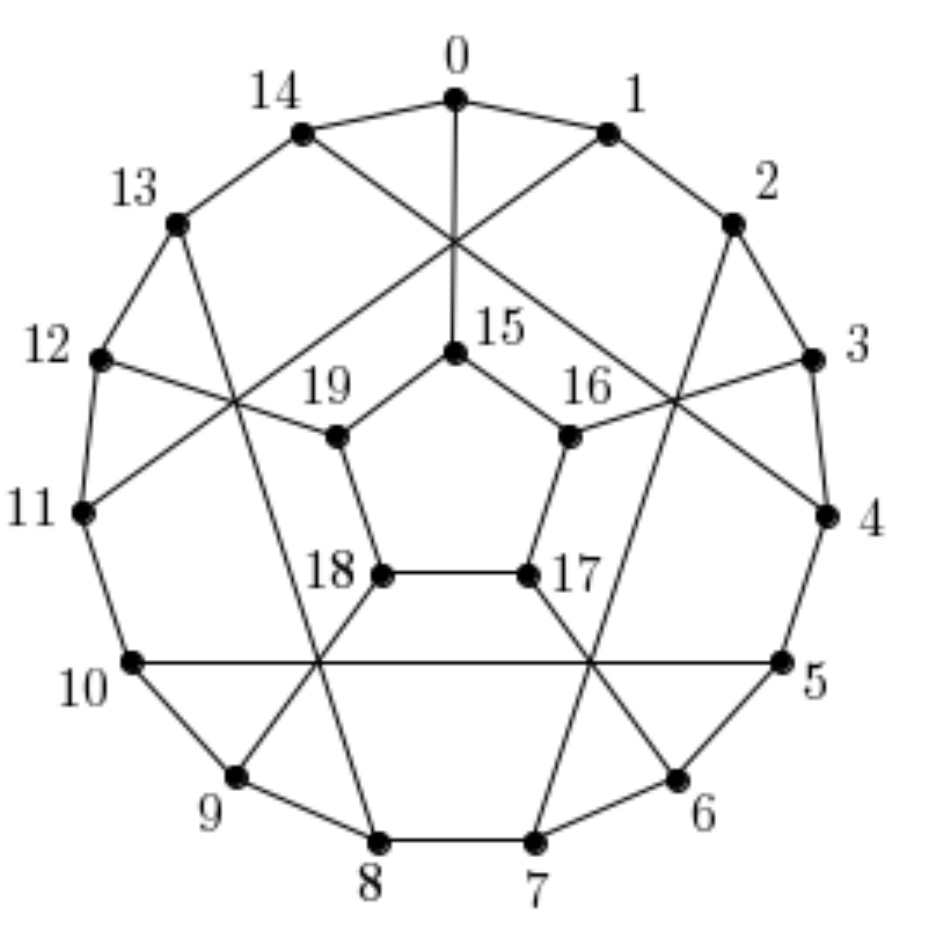}
\caption{Isaacs snark $J_5$}
\label{fig:j5}
\end{minipage}
\hfill
\begin{minipage}[l]{0.45\linewidth}
\includegraphics[width=\linewidth]{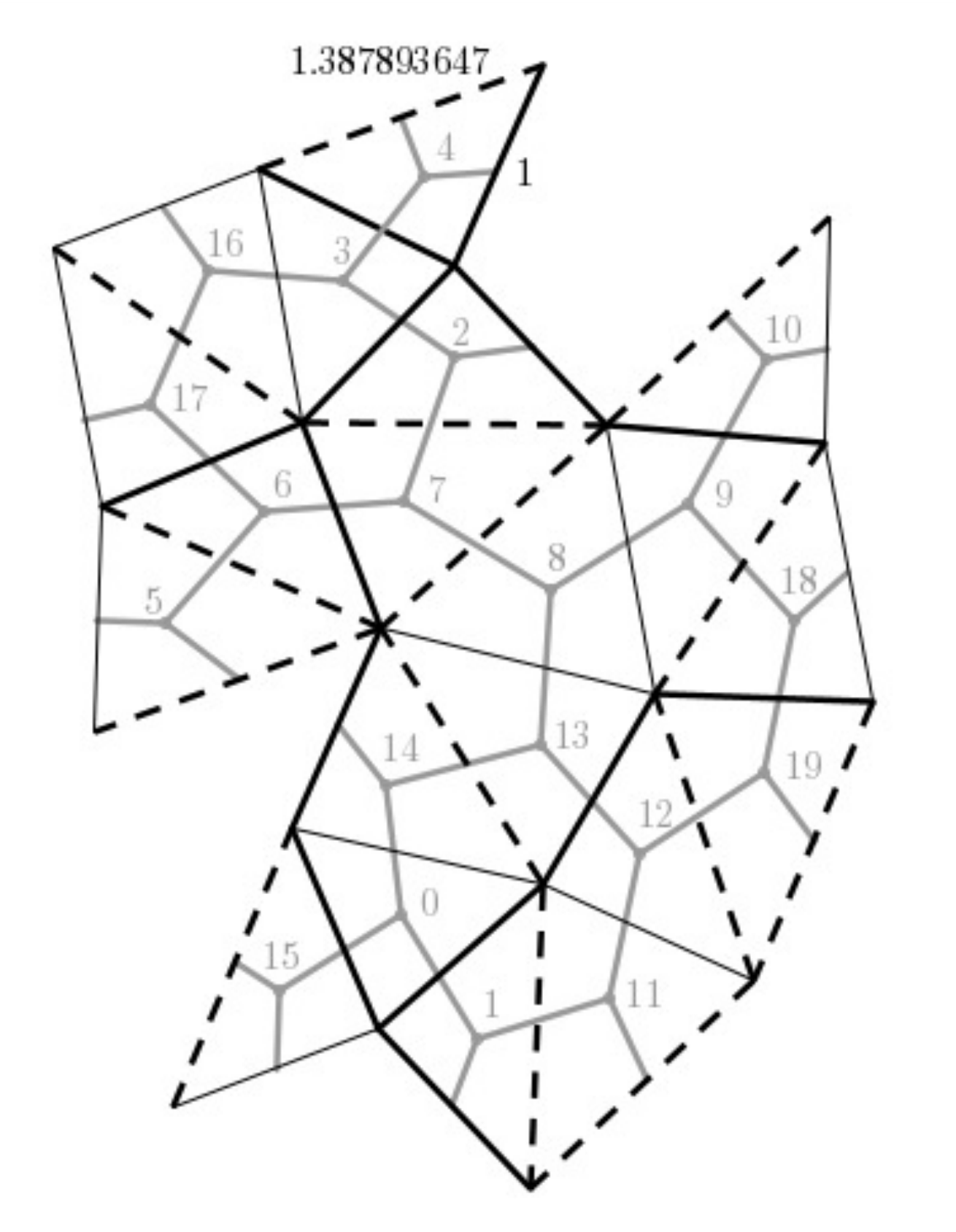}
\caption{A $2$-dimensional flow on $J_5$}
\label{fig:j5_flow}
\end{minipage}%
\end{figure}

Figure \ref{fig:j5_flow} depicts an approximation of the flow triangulation corresponding to the found flow. We emphasise only the sides with minimum (bold) and maximum (dashed) length.

The Petersen graph is the worst case for many other problems in this area. Surprisingly, this seems not to be the case here.
Indeed if we replace every vertex of $P$ with a triangle, denoting the resulting graph by $P_\Delta$, we are not able to extend our $(1 + \sqrt{7/3}, 2)$-NZF on $P$ to a $(1 + \sqrt{7/3}, 2)$-NZF on $P_\Delta$.
The best $(r,2)$-NZF flow on $P_\Delta$ we have up to now is for $r \approx 2.59$, also found by a computer.

We wonder if $\tau^2 \approx 2.618$ is the upper bound on the $2$-dimensional flow number of all bridgeless graphs and also whether this bound is reached by some graph. Therefore, we propose the following problems.

\begin{problem}\label{pro:phi2bound}
Determine if $\phi_2(G) \le \tau^2$ for every bridgeless graph $G$.
\end{problem}

\begin{problem}
Establish the existence (or not) of a bridgeless cubic graph $G$ with $\phi_2(G) = \tau^2$.
\end{problem}

We would also like to note that flow triangulations can be represented in a topological way. For instance, the $(1 + \sqrt{7/3})$-flow triangulation of the Petersen graph can be described as a dual of $P$ embedded on a torus. Similarly, the aforementioned flow triangulations for $K_4$, $K_{3,3}$ and $J_5$ can be also obtained from embeddings on some  orientable surfaces. However, since we need to measure Euclidean distance, we avoid mentioning other surfaces, where the notion of distance is not clear. 

Also, we noted that it is not clear if every $2$-dimensional flow on a cubic graph can be represented through a nice flow triangulation. We do not know the answer even for bipartite cubic graphs, which are perhaps the most simple family of cubic graphs for this problem, since each $2$-flow triangulation consists of equilateral triangles with side length $1$. Therefore, we leave it as a further open problem.

\section{Acknowledgments}

The first author is supported by a Postdoctoral Fellowship of the Research Foundation Flanders (FWO), project number 1268323N.
The third author is suppoered by the research grants APVV-19-0308, VEGA 1/0743/21 and VEGA 1/0727/22.

\end{document}